\def\C{{\mathbf C}}
\def\A{{\mathbf A}}
\newtheorem{theorem}{Theorem}[section]
\newtheorem{thm}[theorem]{Theorem}
\newtheorem{lemma}[theorem]{Lemma}
\newtheorem{lem}[theorem]{Lemma}
\newtheorem{proposition}[theorem]{Proposition}
\newtheorem{prop}[theorem]{Proposition}
\theoremstyle{definition}
\theoremstyle{remark}
\newtheorem{remark}[theorem]{Remark}
\renewcommand{\(}{\left(}
\renewcommand{\)}{\right)}
\newcommand{\set}[1]{\left\lbrace#1\right\rbrace}
\newcommand{\mm}[4]{\(\begin{smallmatrix} #1 & #2\\ #3 & #4\end{smallmatrix}\)}
\newcommand{\mb}[4]{\(\begin{array}{cc} #1 & #2\\ #3 & #4\end{array}\)}
\DeclareMathOperator{\gal}{Gal}
\DeclareMathOperator{\diag}{diag}
\DeclareMathOperator{\tr}{tr}
\DeclareMathOperator{\ind}{Ind}
\DeclareMathOperator{\SO}{SO}
\DeclareMathOperator{\GSp}{GSp}
\DeclareMathOperator{\PGU}{PGU}
\DeclareMathOperator{\GU}{GU}
\DeclareMathOperator{\SL}{SL}
\DeclareMathOperator{\GL}{GL}
\DeclareMathOperator{\PGL}{PGL}
\begin{document}

\title{Multivariate Rankin-Selberg integrals on $\GL_4$ and $\GU(2,2)$}
\author{Aaron Pollack}
\address{Department of Mathematics, Institute for Advanced Study, Princeton, NJ 08540, USA}\email{aaronjp@math.ias.edu}
\author{Shrenik Shah}
\address{Department of Mathematics, Columbia University, New York, NY 10027, USA}\email{snshah@math.columbia.edu}

\thanks{A.P.\ has been supported by NSF grant DMS-1401858.  S.S.\ has been supported by NSF grants DGE-1148900 and DMS-1401967.}

\begin{abstract}
Inspired by a construction by Bump, Friedberg, and Ginzburg of a two-variable integral representation on $\GSp_4$ for the product of the standard and spin $L$-functions, we give two similar multivariate integral representations.  The first is a three-variable Rankin-Selberg integral for cusp forms on $\PGL_4$ representing the product of the $L$-functions attached to the three fundamental representations of the Langlands $L$-group $\SL_4(\C)$.  The second integral, which is closely related, is a two-variable Rankin-Selberg integral for cusp forms on $\PGU(2,2)$ representing the product of the degree 8 standard $L$-function and the degree 6 exterior square $L$-function.
\end{abstract}

\maketitle

\section{Introduction}

Integral representations of Rankin-Selberg type have been used over the years to relate the analytic behavior of $L$-functions to the study of the more tractable analytic properties of Eisenstein series.  The vast majority of Rankin-Selberg integral representations relate an Eisenstein series in a single variable to a single $L$-function together with some normalizing factors, which usually take the form of Dirichlet $L$-functions.

In the last couple of decades, there has been interest in multivariate versions of this type of construction.   Bump and Friedberg \cite{bf} gave a two-variable integral on $\GL_n$ representing the product of the standard and exterior square $L$-functions.  Bump, Furusawa, and Ginzburg \cite{bfg} later gave a two-variable integral on $\GL_{3n}$ unfolding to a nonunique model that represents the product of the standard $L$-function and the dual of standard $L$-function.  Bump, Friedberg, and Ginzburg \cite{bfg2} gave several constructions of two-variable integral representations on $\GSp_4, \GSp_6,$ and $\GSp_8$ representing the product of the standard and spin $L$-functions.  These were among the first examples of such identities in more than one complex variable.

Ginzburg--Hundley \cite{ginzh} found the first \emph{three}-variable Rankin-Selberg integral; for a generic cusp form on the split orthogonal similitude group $\mathrm{GSO}_8$, it represents the product of the standard $L$-function with two spin $L$-functions.  Gan--Hundley \cite{gh} gave a general construction of an integral for quasi-split groups of type $D_4$ that specializes to a different three-variable integral when the group is split.  Continuing the work initiated in \cite{ginzh}, Hundley \cite{hundley} gave various constructions of two-variable integrals on split orthogonal groups.  Recently, Hundley-Shen \cite{hs} gave a two-variable integral on $\GSp_4\times \GL_2\times \GL_2$ representing the product of two $\GL_2$-twisted spin $L$-functions, one coming from each $\GL_2$-factor.  We found in \cite{pollackShah2} a two-variable Rankin-Selberg integral on $\GSp_4 \times \GL_2$.

Such integrals are valuable for many reasons.  Analysis of the Eisenstein series in these constructions give a tool to study the relationship between the different $L$-functions involved; for instance, Bump, Friedberg, and Ginzburg \cite[Theorem A]{bfg2} rule out simultaneous poles at $s=1$ for the standard and spin $L$-functions on $\GSp_6$.  There seems to be an interesting relationship between multivariate Rankin-Selberg integrals and those unfolding to non-unique models, which we have explored in \cite{pollackShah3} and \cite{pollackShah}.  There is an analogous relationship between the $\GU(2,2)$ construction below and a non-unique model integral given by the first named author in \cite{pollack}.

Inspired by the form of the aforementioned Bump-Friedberg-Ginzburg construction on $\GSp_4$ \cite{bfg2}, we give two multivariate Rankin-Selberg integrals that have a similar shape -- one in three variables on $\GL_4$ and one in two variables on $\GU(2,2)$.  The integrand of the construction in \cite{bfg2} is the product of a cusp form with both a Siegel and Klingen Eisenstein series.  Using the symplectic form given by the anti-diagonal matrix
\[J=\(\begin{array}{cccc} & & & 1 \\ & & 1 & \\ & -1 & & \\ -1 & & & \end{array}\),\]
the Siegel and Klingen parabolics on $\GSp_4$ are those represented by matrices with entries of the form
\begin{equation} \label{eqn:siegelklingen} \(\begin{array}{cccc} * & * & * & * \\ * & * & * & * \\ & & * & * \\ & & * & * \end{array}\)\textrm{ and } \(\begin{array}{cccc} * & * & * & * \\ & * & * & * \\ & * & * & * \\ & & & * \end{array}\),\end{equation}
respectively.  Both are maximal, so the Eisenstein series for these parabolics are functions of a single complex variable.  The parabolics with the same shape on $\GL_4$ are respectively maximal and non-maximal, with respective Eisenstein series in one and two complex variables.  On the other hand, the parabolics with the same shape on $\GU(2,2)$ are both maximal and give Eisenstein series in a single variable.  Both the integral representations given here are of the products of the pair of Eisenstein series on $\GL_4$ or $\GU(2,2)$ with a cusp form in a generic automorphic representation.

We give rough statements of the main theorems of the paper.  For precise definitions of the groups, Eisenstein series, etc., see Sections \ref{sec:3var} or \ref{sec:2var} below.  We first give the integral on $\GL_{4/F}$, $F$ a number field.  Everywhere in this paper, $\mathbf{A}$ denotes the adeles $\mathbf{A}_F$ of $F$.  We write $\zeta_F$ for the Dedekind zeta function of $F$.
\begin{thm}
If $\pi$ is a cuspidal automorphic representation on $\GL_{4/F}$ with trivial central character and $E_P(g,w)$ and $E_Q(g,s_1,s_2)$ are the Eisenstein series with degenerate data associated to the parabolics $P$ and $Q$ given by the respective shapes in (\ref{eqn:siegelklingen}), then we have
\begin{align*} &\int_{\GL_4(F)Z(\A)\backslash \GL_4(\A)}\phi(g)E_P(g,w)E_Q(g,s_1,s_2)\,dg\\ =_S &\frac{L(\pi,\mathrm{Std},2w+s_1-s_2-\frac{1}{2})L(\pi,\wedge^2,2s_1+2s_2-1)L(\pi,\wedge^3,2w-s_1+s_2-\frac{1}{2})}{\zeta_F(4w)\zeta_F(4s_1)\zeta_F(4s_2)\zeta_F(4w-1)\zeta_F(4s_1+4s_2-2)},
\end{align*}
where $=_S$ means that the two sides of the equality are both Eulerian (i.e.\ factor into local components for each place) and, moreover, that the local components are the same away from a finite set of places $S$.
\end{thm}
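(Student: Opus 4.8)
The plan is to establish the identity by the standard unfold--factorize--compute method, first working in a region of absolute convergence of the triple of complex parameters and then extending to all $(w,s_1,s_2)$ by meromorphic continuation of both sides. First I would unfold one of the Eisenstein series, say $E_Q$, writing it as $\sum_{\gamma\in Q(F)\backslash\GL_4(F)}f_Q(\gamma g,s_1,s_2)$ for a factorizable degenerate section $f_Q$; interchanging the sum with the integral and using the left $\GL_4(F)$-invariance of both $\phi$ and $E_P$ collapses the global quotient to a parabolic one, giving
\[ I=\int_{Q(F)Z(\A)\backslash\GL_4(\A)}\phi(g)\,E_P(g,w)\,f_Q(g,s_1,s_2)\,dg. \]

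I would then unfold the remaining Eisenstein series $E_P$ over this smaller quotient and analyze the resulting sum over $Q(F)\backslash\GL_4(F)/P(F)$ double cosets, while simultaneously invoking the Fourier expansion of $\phi$ along the relevant unipotent radicals. The essential mechanism, exactly as in the Bump--Friedberg--Ginzburg $\GSp_4$ construction that motivates this integral, is that the cuspidality of $\phi$ annihilates the contributions of the non-open orbits, on which $\phi$ is paired against degenerate, non-generic characters, leaving a single open orbit on which the expansion collapses to the generic Whittaker coefficient $W_\phi(g)=\int_{U(F)\backslash U(\A)}\phi(ug)\psi^{-1}(u)\,du$, with $U$ the maximal unipotent and $\psi$ a generic character. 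After this reduction the integral acquires the shape of $W_\phi$ paired against the restrictions of the two degenerate sections over a product of a torus and a unipotent subgroup. Locating this open orbit and verifying that it alone survives is the principal combinatorial obstacle.

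Once the integral is written in terms of $W_\phi$, the Eulerian factorization is formal: for a decomposable $\phi$ the Whittaker function factors as $W_\phi=\prod_v W_v$ by uniqueness of the Whittaker model on $\GL_4$ (recall that cuspidal representations of $\GL_4$ are generic), the sections $f_P$ and $f_Q$ are chosen factorizable, and the domain is a restricted direct product, so $I=\prod_v I_v$. This furnishes the Eulerian content of ``$=_S$'' and reduces the theorem to a purely local statement at the unramified places.

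The technical heart, and the step I expect to be the main obstacle, is the unramified local integral. At a place $v$ where $\pi_v$, $\psi_v$, and the sections are unramified, $W_v$ is the normalized spherical Whittaker function given by the Casselman--Shalika formula as a Weyl-character sum in the Satake parameter $\alpha_v=\diag(\alpha_1,\dots,\alpha_4)$, and the torus integration turns $I_v$ into an explicit multiple Dirichlet series in $q_v^{-w}, q_v^{-s_1}, q_v^{-s_2}$. I would sum this series and match it with
\[ \frac{L_v(\pi,\mathrm{Std},2w+s_1-s_2-\tfrac{1}{2})\,L_v(\pi,\wedge^2,2s_1+2s_2-1)\,L_v(\pi,\wedge^3,2w-s_1+s_2-\tfrac{1}{2})}{\zeta_{F,v}(4w)\,\zeta_{F,v}(4s_1)\,\zeta_{F,v}(4s_2)\,\zeta_{F,v}(4w-1)\,\zeta_{F,v}(4s_1+4s_2-2)}. \]
The nontrivial checks are that the three arguments $2w+s_1-s_2-\tfrac{1}{2}$, $2s_1+2s_2-1$, and $2w-s_1+s_2-\tfrac{1}{2}$ emerge correctly from pairing the two degenerate inducing characters against $\alpha_v$ -- with the symmetry under $s_1\leftrightarrow s_2$ reflecting the $\wedge^1/\wedge^3$ duality on the dual group $\SL_4(\C)$ -- and that the denominator is produced by the Gindikin--Karpelevich normalizing factors of $E_P$ and $E_Q$ together with the Haar measure of the torus. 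Throughout, the residual care lies in justifying convergence and the interchanges of summation and integration at each unfolding.
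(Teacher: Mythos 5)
Your global strategy coincides with the paper's: unfold both Eisenstein series, analyze the double coset space $Q(F)\backslash\GL_4(F)/P(F)$, use cuspidality to kill the degenerate orbits, and Fourier-expand the surviving open-orbit integral down to the Whittaker coefficient, after which Eulerian factorization follows from uniqueness of Whittaker models. This is exactly Lemma \ref{bruhat} and Proposition \ref{prop:unfoldinggl4} of the paper (there are four cosets, and the three non-open ones die because their stabilizers in $P(F)$ contain unipotent radicals of parabolics), so the unfolding portion of your plan is sound and is the same route.

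The genuine gap is the unramified local computation, which you flag as the main obstacle but then leave entirely unexecuted (``I would sum this series and match it''). This step is not a formal summation: after the inner unipotent integral is evaluated as a product of truncated geometric series, one faces a triple sum of the form $\sum_{\ell,m,n}K[\ell,m,n]Y^m(\cdots)$ over torus cocharacters, and its evaluation rests on a nontrivial identity of $\SL_4$-characters — the paper's Lemma \ref{charTheory}, proved via the Littlewood--Richardson rule — which factors this sum as $\left(\sum_{t,v}A[t,0,v]X^tZ^v\right)\left(\sum_{u}A[0,u,0]Y^u\right)$; only then do the Casselman--Shalika formula and classical plethysm identities convert these two generating functions into the claimed ratio of $L$-functions. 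Without this (or an equivalent) combinatorial input, the ``matching'' has no proof, and it is precisely the content of the theorem. Relatedly, your account of the denominator is off: the sections $f_P$, $f_Q$ here are unnormalized degenerate sections, so no Gindikin--Karpelevich normalizing factors are present; instead $\zeta_F(4w)\zeta_F(4s_1)\zeta_F(4s_2)$ arise from the inner unipotent integrals, while $\zeta_F(4w-1)$ and $\zeta_F(4s_1+4s_2-2)$ are produced by the character identity itself, as the correction factors relating the generating functions of $A[t,0,v]$ and $A[0,u,0]$ to $L(\mathrm{Std})L(\wedge^3)$ and $L(\wedge^2)$ respectively.
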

We have the following result for the group $\GU(2,2)$.  Write $E/F$ for the totally imaginary quadratic extension used to define this group.  (See Section \ref{sec:2var} for details.)
\begin{thm}
If $\pi$ is a generic cuspidal automorphic representation on $\GU(2,2)$ with trivial central character and $E_P(g,w)$ and $E_Q(g,s)$ are the Siegel and Klingen Eisenstein series on $\GU(2,2)$ respectively, then we have
\begin{align*}
  &\int_{\GU(2,2)(F)Z(\A)\backslash \GU(2,2)(\A)}\phi(g)E_P(g,w)E_Q(g,s)\,dg=_S\frac{L(\pi,\mathrm{Std},2w-\frac{1}{2})L(\pi,\wedge^2,3s-1)}{L(\epsilon_{E/F},4w-1)\zeta_F(4w)\zeta_E(3s)\zeta_F(6s-2)},
\end{align*}
where $\epsilon_{E/F}$ is the quadratic character of the field extension.
\end{thm}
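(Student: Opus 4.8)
The plan is to establish the identity by the Rankin--Selberg unfolding method, reducing the global integral to an Euler product whose local factors are then evaluated at the unramified places. Write $G=\GU(2,2)$, let $f_{Q,s}$ denote the section inducing the Klingen Eisenstein series, so that $E_Q(g,s)=\sum_{\gamma\in Q(F)\backslash G(F)}f_{Q,s}(\gamma g)$, and recall that the cusp form $\phi$ and the Siegel Eisenstein series $E_P$ are left $G(F)$-invariant. Substituting this expansion of $E_Q$ and unfolding, the integral over $G(F)Z(\A)\backslash G(\A)$ collapses to
\[
\int_{Q(F)Z(\A)\backslash G(\A)}\phi(g)\,E_P(g,w)\,f_{Q,s}(g)\,dg.
\]

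First I would integrate along the unipotent radical $N_Q$ of the Klingen parabolic. Since $f_{Q,s}$ is left $N_Q(\A)$-invariant, this amounts to replacing $\phi(g)E_P(g,w)$ by its integral over $N_Q(F)\backslash N_Q(\A)$, which I would analyze by Fourier-expanding $E_P$ along $N_Q$ and pairing term by term against the Fourier expansion of $\phi$. Cuspidality of $\phi$ annihilates the constant-term contribution, and after a further expansion the surviving piece is governed by the global Whittaker coefficient $W_\phi$ of $\phi$; because $\pi$ is assumed generic, $W_\phi$ is nonzero and the reduction goes through. This is exactly where the genericity hypothesis is used, and it is the reason no such hypothesis is needed in the $\GL_4$ companion, where every cuspidal representation is automatically generic. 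Choosing factorizable data, the resulting integral is Eulerian and factors as $\prod_v I_v(w,s)$, where each $I_v$ is built from the local Whittaker function together with the local Siegel and Klingen sections.

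The last step is the unramified local calculation. At a place $v\notin S$ at which $\pi_v$, the additive character, and the two sections are all unramified, I would evaluate $I_v(w,s)$ using the Casselman--Shalika formula for the normalized spherical Whittaker function of $\pi_v$ together with the explicit spherical sections of the two Eisenstein series, and check that it equals
\[
\frac{L_v(\pi,\mathrm{Std},2w-\tfrac{1}{2})\,L_v(\pi,\wedge^2,3s-1)}{L_v(\epsilon_{E/F},4w-1)\,\zeta_{F,v}(4w)\,\zeta_{E,v}(3s)\,\zeta_{F,v}(6s-2)},
\]
the denominator being produced by the standard normalization of the two degenerate Eisenstein series. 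I expect the two principal difficulties to be the following. First, the combinatorics of the double unfolding, where one must identify the unique contributing orbit of $Q(F)$ on the relevant flag and verify that every other orbit dies by cuspidality. Second, the unramified computation itself, in which the sum over the relevant Weyl-group coset (equivalently, the Casselman--Shalika expansion of the spherical Whittaker value) must be shown to collapse to precisely this ratio, producing the degree $8$ standard and degree $6$ exterior square $L$-functions at the specific arguments $2w-\tfrac{1}{2}$ and $3s-1$ rather than some other combination. Matching these linear forms in $w$ and $s$ against the embeddings of the two Eisenstein data is the most delicate bookkeeping in the argument.
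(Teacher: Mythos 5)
Your unfolding outline is essentially the paper's argument run in the opposite order (the paper unfolds $E_P$ first and then $E_Q$ over the double cosets $Q(F)\backslash G(F)/P(F)$, which for $\GU(2,2)$ are represented by $\{1,\nu_P\nu_Q^{-1}\}$; the identity coset dies by cuspidality since its stabilizer contains a unipotent radical, and Fourier expansion of $\phi$ along the remaining root groups produces $W_\phi^\chi$). Your observation about where genericity enters, and why no such hypothesis is needed for $\GL_4$, is correct. One technical caveat: the unipotent radical $N_Q$ of the Klingen parabolic is a Heisenberg group, not abelian, so ``Fourier-expanding $E_P$ along $N_Q$'' is not a straightforward abelian expansion; the double-coset unfolding is how the paper sidesteps this.

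The genuine gap is the unramified local identity, which you defer to a final ``check'' but which is the actual content of the theorem, and your proposal lacks the ingredients needed to carry it out. Because the $L$-group of $\GU(2,2)$ is the non-connected group $(\GL_1(\C)\times\GL_4(\C))\rtimes\gal(E/F)$, the local factors of the degree $8$ standard and degree $6$ exterior square $L$-functions have different shapes at places split in $E$ and places inert in $E$, and the computation must be split into these two cases --- a dichotomy your proposal never mentions. At split places the local integral literally becomes the $\GL_4$ integral of the companion theorem with $s_1=s_2=\tfrac{3}{4}s$, whose evaluation is not a routine Casselman--Shalika manipulation: it rests on a character-theoretic identity for $\SL_4$ (Lemma~\ref{charTheory}, proved via the Littlewood--Richardson rule). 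At inert places the paper does not evaluate the Weyl-group sum directly either: it invokes Tamir's theorem that, since the rational root system of $\GU(2,2)$ is that of $\GSp_4$, the Casselman--Shalika formula for $G$ restricts on the torus of the embedded $\GSp_4$ to the $\GSp_4$ formula; this reduces the sum to the two-variable $\GSp_4$ computation of Bump--Friedberg--Ginzburg, whose answer is then converted from $\GSp_4$ Spin and standard factors into $\GU(2,2)$ exterior-square and standard factors by the $L$-function comparison of Proposition~\ref{gsp4L}. Without these steps (or a substitute for them), ``check that the sum collapses to precisely this ratio'' is not a proof; the ``delicate bookkeeping'' you anticipate is exactly where the split/inert structure and the reduction to $\GSp_4$ do the work.
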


{\bf Acknowledgements}:  We thank the anonymous referee for suggestions that have improved the writing of this paper, as well as for a simplification of the proof of Lemma \ref{charTheory} below.  We also thank Wee Teck Gan for catching a referencing error in an earlier version of this article.

\section{Three-variable Rankin-Selberg integral for $\GL_4$}\label{sec:3var}

Let $\set{b_1, b_2, b_3, b_4}$ be the ordered basis of $V_4$, the standard representation of $G=\GL_4$ with the \emph{right} action.  Define the standard maximal parabolic $P$ and non-maximal parabolic $Q$ to be the respective stabilizers of the flags $\langle b_3,b_4 \rangle$ and $\langle b_4 \rangle \subseteq \langle b_2,b_3,b_4 \rangle$.  These definitions agree with the shapes given by (\ref{eqn:siegelklingen}) in the introduction.

We define Eisenstein series for $P$ and $Q$ with degenerate data.  For $P$, pick a standard section $f_P(g,w) \in \ind_{P(\mathbf{A})}^{G(\mathbf{A})}\delta_P^w$ that factorizes as $f_P(g,w) = \prod_vf_{P,v}(g_v,w)$, $g_v \in \GL_4(F_v)$.  For $Q$, first denote by $P_{3,1}$ the maximal parabolic stabilizing the line $\langle b_4\rangle$ via the right action, so that the elements of $P_{3,1}$ have $0$'s for their $(4,1)$, $(4,2)$, and $(4,3)$ entries.  Similarly define $P_{1,3}$ to be the parabolic with $0$'s in the spots $(1,2), (1,3),$ and $(1,4)$, so that $Q = P_{3,1} \cap P_{1,3}$.  Then we pick a standard section $f_Q(g,s_1,s_2)$ in $\ind_{Q(\mathbf{A})}^{G(\mathbf{A})}(\delta_{P_{3,1}}^{s_1}\delta_{P_{1,3}}^{s_2})$ that similarly factorizes as $\prod_v f_{Q,v}(g_v,s_1,s_2)$. The Eisenstein series are then $E_P(g,w) = \sum_{\gamma \in P(F) \backslash \GL_4(F)}{f_P(\gamma g,w)}$ and $E_Q(g,s_1,s_2) = \sum_{\gamma \in Q(F) \backslash \GL_4(F)}{f_Q(\gamma g,s_1,s_2)}$, which are absolutely convergent when the real parts of $w, s_1, s_2$ are sufficiently large.  If $\pi$ is a cuspidal representation, and $\phi$ is in the space of $\pi$, the global integral is
\[I(\phi,s_1,s_2,w) = \int_{\GL_4(F)Z(\A)\backslash \GL_4(\A)}{\phi(g)E_P(g,w)E_Q(g,s_1,s_2)\,dg}.\]
We will show that the integral unfolds to the Whittaker model of $\phi$, and that it represents the product of the $L$-functions of the three fundamental representations of $\SL_4$.

\subsection{Global construction}
Set
\[\nu_P=\(\begin{array}{cc|cc} & 1& & \\  1& & & \\ \hline  & & &1\\  & & 1& \end{array}\) \textrm{ and }\nu_Q=\(\begin{array}{c|cc|c} 1& & & \\ \hline & &1 & \\ & 1& &\\ \hline & & &1 \end{array}\).\]
The following lemma will be used in the unfolding.
\begin{lemma}\label{bruhat} The double coset space $Q(F) \backslash \GL_4(F) \slash P(F)$ is represented by the four permutation matrices corresponding to the permutations $\{\nu_P\nu_Q^{-1}=(1243),1,\gamma_1 = (123),\gamma_2= (243)\}$.  For $i \in \set{1,2}$, the stabilizer of the coset $Q(F)\gamma_i$ inside $P(F)$ contains the unipotent radical of a parabolic subgroup of $\GL_4$.
\end{lemma}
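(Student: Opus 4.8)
The plan is to handle the two assertions in turn: first enumerate the double cosets, then analyze the two distinguished stabilizers.

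For the double coset count I would use the standard dictionary identifying $Q(F)\backslash\GL_4(F)/P(F)$ with the $\GL_4(F)$-orbits on $\GL_4/Q \times \GL_4/P$ under the diagonal action. Since $P$ is the stabilizer of the $2$-plane $U=\langle b_3,b_4\rangle$ and $Q$ the stabilizer of the flag $L\subseteq M$ with $L=\langle b_4\rangle$ and $M=\langle b_2,b_3,b_4\rangle$, the orbit attached to $Q(F)\gamma P(F)$ is recorded by the relative position of the translated flag $(L\gamma,M\gamma)$ and $U$, i.e.\ by the pair $d(\gamma)=(\dim(L\gamma\cap U),\dim(M\gamma\cap U))$, which one verifies is a function of the double coset. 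The constraints $\dim L=1$, $\dim M=3$, $\dim U=2$, and $L\subseteq M$ force $d(\gamma)\in\set{(0,1),(0,2),(1,1),(1,2)}$, and each value is realized, so there are exactly four cosets. Computing $d$ on the four listed permutations — e.g.\ $1\mapsto(1,2)$ and $\nu_P\nu_Q^{-1}\mapsto(0,1)$, while $\gamma_1,\gamma_2$ give the two mixed values $(1,1)$ and $(0,2)$ — shows they hit the four distinct values and hence represent the four distinct cosets. (Equivalently one can pass to $W_Q\backslash S_4/W_P$ with $W_P=\langle(12),(34)\rangle$ and $W_Q=\langle(23)\rangle$ and list minimal-length representatives, or count $3\times 2$ nonnegative integer matrices with row sums $(1,2,1)$ and column sums $(2,2)$; all routes give four.)

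For the stabilizer statement, observe that the stabilizer of $Q(F)\gamma_i$ in $P(F)$ is $P(F)\cap\gamma_i^{-1}Q(F)\gamma_i$, which equals the subgroup of $\GL_4$ preserving each of the three subspaces $U$, $L\gamma_i$, and $M\gamma_i$. The elementary fact I would isolate is this: if $N_{L_0}$ denotes the unipotent radical of the maximal parabolic stabilizing a line $L_0$ — the transvections $1+X$ with $X\in\mathrm{Hom}(V_4/L_0,L_0)$ — then every element of $N_{L_0}$ preserves every subspace containing $L_0$; dually, the unipotent radical $N_H$ of the stabilizer of a hyperplane $H$ preserves every subspace contained in $H$. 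I would then compute the two configurations. For $\gamma_1$ one finds $L\gamma_1=\langle b_4\rangle$ with $L\gamma_1\subseteq U$ and $L\gamma_1\subseteq M\gamma_1$, so $N_{L\gamma_1}$ preserves all of $U,L\gamma_1,M\gamma_1$ and therefore lies in the stabilizer; for $\gamma_2$ one finds $M\gamma_2=\langle b_2,b_3,b_4\rangle$ with $U\subseteq M\gamma_2$ and $L\gamma_2\subseteq M\gamma_2$, so $N_{M\gamma_2}$ lies in the stabilizer. In each case the exhibited group is the unipotent radical of a proper maximal parabolic, which is exactly the claim.

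The routine part is the bookkeeping with the permutation-matrix and row-action conventions, which must be kept consistent through the intersection-dimension computations; this is where transpose/inverse slips would most easily creep in, and is the only real obstacle. The single conceptual point worth stating is the incidence pattern separating the four cells: for $\gamma_1$ the line $L\gamma_1$ sits below both $U$ and $M\gamma_1$, and for $\gamma_2$ the hyperplane $M\gamma_2$ sits above both $U$ and $L\gamma_2$, and it is precisely this degeneracy that forces a full unipotent radical into the stabilizer and (via cuspidality of $\phi$) will make these terms drop out of the unfolding. By contrast the identity cell has stabilizer the standard Borel $P\cap Q$, which likewise contains a unipotent radical, whereas for the big cell $\nu_P\nu_Q^{-1}$ the flag meets $U$ transversally (invariant $(0,1)$), its stabilizer contains no unipotent radical of a parabolic, and it is this cell that will produce the Whittaker integral.
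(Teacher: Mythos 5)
Your proof is correct, and it reaches the lemma by a genuinely different route than the paper. The paper argues combinatorially: the Bruhat decomposition identifies $Q(F)\backslash \GL_4(F)/P(F)$ with $\langle (23)\rangle \backslash S_4/\langle (12),(34)\rangle$, the six cosets of $S_4/\langle (12),(34)\rangle$ are listed via the representatives satisfying $\sigma(1)<\sigma(2)$, $\sigma(3)<\sigma(4)$, the $\langle(23)\rangle$-action collapses these to the four stated permutations, and the stabilizer claim is then simply asserted ($N_{P_{3,1}}$ for $\gamma_1$, $N_{P_{1,3}}$ for $\gamma_2$) with no argument. You instead work geometrically: the invariant $d(\gamma)=(\dim(L\gamma\cap U),\dim(M\gamma\cap U))$ is bounded to four possible values, the four representatives realize four distinct values, and the stabilizer claim follows from your transvection lemma. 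Note that the two routes lean on equivalent standard inputs: your assertion that $d$ is a \emph{complete} invariant of the $\GL_4(F)$-orbit on pairs of flags (needed to conclude there are at most four cosets, not just at least four) is precisely the Bruhat-type statement the paper invokes, so neither argument is more elementary. What yours buys is that the second half of the lemma becomes conceptual rather than a bare assertion -- the incidence degeneracies ($L\gamma_1$ below both $U$ and $M\gamma_1$; $M\gamma_2$ above both $U$ and $L\gamma_2$) visibly force a full unipotent radical into the stabilizer, which is exactly what cuspidality needs in Proposition \ref{prop:unfoldinggl4}. Your computed values $(1,2)$, $(0,1)$, $(1,1)$, $(0,2)$ are the correct ones under the paper's convention, in which the permutation matrix of $\sigma$ sends $b_i\mapsto b_{\sigma^{-1}(i)}$ under the right action (this is pinned down by $\nu_P\nu_Q^{-1}=(1243)$); with the opposite convention $\gamma_1$ would land in the identity coset, so the bookkeeping worry you flagged is real but you resolved it correctly. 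One last point of reconciliation: your $N_{M\gamma_2}$ is the unipotent radical of the stabilizer of the hyperplane $\langle b_2,b_3,b_4\rangle$, which is what the paper's $P_{1,3}$ must mean (its zeros should sit in the first column, i.e.\ spots $(2,1),(3,1),(4,1)$, for $Q=P_{3,1}\cap P_{1,3}$ to be the Klingen parabolic); granting that reading, your stabilizer subgroups agree exactly with the paper's.
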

\begin{proof} The Bruhat decomposition gives $Q(F) \backslash \GL_4(F) \slash P(F) = \langle (23) \rangle \backslash S_4 \slash \langle (12), (34) \rangle$.  The coset space $S_4 \slash \langle (12), (34) \rangle$ is represented by the six permutations $\sigma \in S_4$ with $\sigma(1) < \sigma(2)$ and $\sigma(3) < \sigma(4)$.  Letting $\langle (23) \rangle$ act on this set, one finds 4 distinct lists of the form $(\sigma(1),\sigma(2), \sigma(3),\sigma(4))$, namely $(1,2,3,4)$, $(1,4,2,3)$, $(2,3,1,4)$ and $(2,4,1,3)$.  These correspond to the permutations listed.  The stabilizer for $\gamma_1$ contains the unipotent radical of the parabolic $P_{3,1}$ and the stabilizer for $\gamma_2$ contains the unipotent radical of $P_{1,3}$.
\end{proof}

Define $R = (\nu_P^{-1}Q\nu_P) \cap (\nu_Q^{-1}P\nu_Q)\subseteq \GL_4$.  Then $R$ consists of matrices of the form
\begin{equation}\label{eqn:rform}\(\begin{array}{cc|cc} *& &* &* \\  &* & &* \\ \hline  & &* &\\  & & &* \end{array}\).\end{equation}
Define $R^0$ to be the unipotent radical of $R$.
\begin{proposition}\label{prop:unfoldinggl4} Denote by
\[W_\phi^\chi(g) = \int_{U_B(F) \backslash U_B(\mathbf{A})} \chi^{-1}(u)\phi(ug)\]
the Whittaker model of $\phi$ attached to a non-degenerate character $\chi: U_B(F) \backslash U_B(\mathbf{A}) \rightarrow \mathbf{C}^\times$.  Then
\[I(\phi,s_1,s_2,w) = \int_{R^0(\A)Z(\A)\backslash \GL_4(\A)}{W_\phi^\chi(g)f_P(\nu_Qg,w)f_Q(\nu_Pg,s_1,s_2)\,dg}.\]
\end{proposition}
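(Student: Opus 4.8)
The plan is to unfold in two stages: first expand the maximal Eisenstein series $E_P$ and collapse the integral onto a single open double coset, and then expand the cusp form $\phi$ into its Whittaker function. First I would unfold $E_P$: since both $\phi$ and $E_Q$ are left $\GL_4(F)$-invariant, substituting $E_P(g,w)=\sum_{\gamma\in P(F)\backslash\GL_4(F)}f_P(\gamma g,w)$ collapses the domain to $P(F)Z(\A)\backslash\GL_4(\A)$, giving
\[I(\phi,s_1,s_2,w)=\int_{P(F)Z(\A)\backslash\GL_4(\A)}\phi(g)f_P(g,w)E_Q(g,s_1,s_2)\,dg.\]
Next I would expand $E_Q$ and reorganize the sum over $Q(F)\backslash\GL_4(F)$ by its right $P(F)$-orbits, which are indexed by the double cosets of Lemma~\ref{bruhat}. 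Because $\phi(pg)=\phi(g)$ and $f_P(pg,w)=f_P(g,w)$ for $p\in P(F)$ (the modular character being trivial on $F$-points), this yields
\[I=\sum_{\eta}\int_{(\eta^{-1}Q(F)\eta\cap P(F))Z(\A)\backslash\GL_4(\A)}\phi(g)f_P(g,w)f_Q(\eta g,s_1,s_2)\,dg,\]
with $\eta$ running over $\{1,\gamma_1,\gamma_2,\nu_P\nu_Q^{-1}\}$. For $\eta=\gamma_1,\gamma_2$ the stabilizer contains the unipotent radical of a proper parabolic by Lemma~\ref{bruhat}, and for $\eta=1$ the group $P\cap Q$ is a Borel $B$ with unipotent radical $U_B$; in each case the sections $f_P(g,w)f_Q(\eta g,s_1,s_2)$ are left-invariant under the adelic points of that unipotent radical (the inducing characters are trivial on unipotents, and $N_P,N_Q$ generate $U_B$ in the case $\eta=1$), so the inner integral factors through a period $\int_{[N]}\phi=0$ forced by cuspidality. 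Hence only the open cell $\eta_0=\nu_P\nu_Q^{-1}$ survives.

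For the surviving term I would make the change of variables $g=\nu_Q h$. This turns $f_P(g,w)$ into $f_P(\nu_Q h,w)$ and $f_Q(\eta_0 g,s_1,s_2)$ into $f_Q(\nu_P h,s_1,s_2)$, since $\eta_0\nu_Q=\nu_P$; and conjugating the stabilizer, $\nu_Q^{-1}(\eta_0^{-1}Q\eta_0\cap P)\nu_Q=(\nu_P^{-1}Q\nu_P)\cap(\nu_Q^{-1}P\nu_Q)=R$. Thus
\[I=\int_{R(F)Z(\A)\backslash\GL_4(\A)}\phi(\nu_Q h)\,f_P(\nu_Q h,w)\,f_Q(\nu_P h,s_1,s_2)\,dh.\]

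It remains to unfold $\phi(\nu_Q\cdot)$ into $W_\phi^\chi$. Here I would use that $R=T\ltimes R^0$, with $T$ the diagonal torus and $R^0$ the abelian unipotent group on the entries $(1,3),(1,4),(2,4)$, all of which lie in $[U_B,U_B]$; consequently any non-degenerate $\chi$ is trivial on $R^0$, consistent with both $W_\phi^\chi$ and the product $\Phi(h):=f_P(\nu_Q h,w)f_Q(\nu_P h,s_1,s_2)$ being left $R^0(\A)$-invariant (the latter because $R^0\subseteq\nu_Q^{-1}P\nu_Q$ and $R^0\subseteq\nu_P^{-1}Q\nu_P$, with the inducing characters trivial on unipotents). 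I would then insert the Fourier expansion of the $\GL_4$ cusp form $\phi$ along $U_B$, growing the unipotent integration from $R^0$ up to all of $U_B$ through the remaining simple-root directions $(1,2),(2,3),(3,4)$; cuspidality annihilates every degenerate Fourier coefficient, while the single generic coefficient reassembles into $W_\phi^\chi(h)$ and the accompanying sum over rational points collapses $R(F)$ in the domain down to $R^0(\A)$, yielding the claimed identity.

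The first two stages are routine once Lemma~\ref{bruhat} is in hand; the main obstacle is the last stage. The delicate points there are checking that conjugation by $\nu_Q$ matches the character arising naturally from the geometry with a genuinely non-degenerate $\chi$ on $U_B$, verifying that precisely the degenerate terms are killed by cuspidality, and bookkeeping the measures so that the torus $T(F)$ and the rational unipotent $R^0(F)$ are exactly consumed, leaving the adelic $R^0(\A)$ in the final domain.
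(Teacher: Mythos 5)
Your proposal is correct and takes essentially the same route as the paper's own (much terser) proof: unfold $E_P$, then $E_Q$ via the double cosets of Lemma \ref{bruhat}, discard the cosets $1,\gamma_1,\gamma_2$ by cuspidality, change variables by $\nu_Q$ to land on $R(F)Z(\A)\backslash\GL_4(\A)$, and finally integrate over $R^0(F)\backslash R^0(\A)$ and Fourier-expand along the rest of $U_B$, with the $T(F)$-orbit of non-degenerate characters collapsing the domain to $R^0(\A)Z(\A)\backslash\GL_4(\A)$. The paper compresses exactly these steps into three sentences, so your write-up is a faithful, more detailed expansion of its argument.
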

\begin{proof} Unfolding $E_P$ and then $E_Q$ gives, using Lemma \ref{bruhat} and the cuspidality of $\phi$,
\[I(\phi,s_1,s_2,w) = \int_{\((\nu_Q\nu_P^{-1} Q(F) \nu_P\nu_Q^{-1}) \cap P(F)\) Z(\A)\backslash \GL_4(\A)}{\phi(g)f_P(g,w)f_Q(\nu_P\nu_Q^{-1}g,s_1,s_2)\,dg}.\]
Changing variables, this is
\[\int_{R(F) Z(\A)\backslash \GL_4(\A)}{\phi(g)f_P(\nu_Qg,w)f_Q(\nu_Pg,s_1,s_2)\,dg}.\]
Forming an inner integral over $R^0(F)\backslash R^0(\A)$ and Fourier expanding along the rest of the unipotent radical of the Borel gives the result.\end{proof}
For $v$ a place of $F$, define
\[I_v(W,s_1,s_2,w) = \int_{R^0(F_v)Z(F_v)\backslash G(F_v)}{W^{\chi,v}_\phi(g)f_P(\nu_Qg,w)f_Q(\nu_Pg,s_1,s_2)\,dg}.\]
Also write $\zeta_{F_v}$ for the local factor at $v$ of the $\zeta$-function of $F$.

\subsection{Unramified computation}
In this section we prove the following theorem.
\begin{theorem}\label{thm:unramgl4} When all the data is unramified and $v$ is non-Archimedean,
\[I_v(W,s_1,s_2,w) = \frac{L(\pi_v,\mathrm{Std},2w+s_1-s_2-\frac{1}{2})L(\pi_v,\wedge^2,2s_1+2s_2-1)L(\pi_v,\wedge^3,2w-s_1+s_2-\frac{1}{2})}{\zeta_{F_v}(4w)\zeta_{F_v}(4s_1)\zeta_{F_v}(4s_2)\zeta_{F_v}(4w-1)\zeta_{F_v}(4s_1+4s_2-2)}.\]
\end{theorem}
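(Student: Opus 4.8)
The plan is to exploit the unramifiedness of all the data to collapse the local integral onto the torus, and then to evaluate the resulting lattice sum by a generating-function identity. Since the spherical Whittaker function $W^{\chi,v}_\phi$ and the spherical sections $f_P$, $f_Q$ are right $K$-invariant for $K=\GL_4(\mathcal{O}_v)$, and since right $K$-invariance is preserved by the left translates $g\mapsto f_P(\nu_Q g,w)$ and $g\mapsto f_Q(\nu_P g,s_1,s_2)$, the whole integrand descends to $R^0(F_v)Z(F_v)\backslash G(F_v)/K$. First I would apply the Iwasawa decomposition $G(F_v)=U_B(F_v)T(F_v)K$ and integrate out $K$. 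The key structural observation is that $R^0$ is exactly the derived group $[U_B,U_B]$, namely the root groups in positions $(1,3)$, $(1,4)$, $(2,4)$; consequently $\chi$ is trivial on $R^0$ (so the integrand is genuinely left $R^0$-invariant) and $R^0\backslash U_B$ is represented by the product $U'$ of the three simple root groups in positions $(1,2)$, $(2,3)$, $(3,4)$. This reduces the computation to
\[I_v=\int_{U'(F_v)}\int_{T(F_v)/Z(F_v)}\chi(u)\,W^{\chi,v}_\phi(t)\,f_P(\nu_Q ut,w)\,f_Q(\nu_P ut,s_1,s_2)\,\delta_B(t)^{-1}\,dt\,du,\]
with an appropriate normalization of the modulus factor, and with the torus integral becoming a sum over the cocharacter lattice $\Lambda\cong\Z^4$ modulo the central $\Z$.

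Next I would make each factor explicit. For $t=\diag(\varpi^{\lambda_1},\dots,\varpi^{\lambda_4})$ the Casselman--Shalika formula gives $W^{\chi,v}_\phi(t)=\delta_B(t)^{1/2}\,s_\lambda(\alpha_1,\dots,\alpha_4)$ when $\lambda$ is dominant and $0$ otherwise, where $\alpha=(\alpha_1,\dots,\alpha_4)$ are the Satake parameters of $\pi_v$ and $s_\lambda$ is the associated Schur polynomial. For the two sections I would compute the Iwasawa decomposition of $\nu_Q ut$ relative to $P$ and of $\nu_P ut$ relative to $Q$. Here the defining property of $R$ in (\ref{eqn:rform})---that $R\subseteq\nu_Q^{-1}P\nu_Q$ and $R\subseteq\nu_P^{-1}Q\nu_P$---is what keeps this manageable: conjugating the simple root groups of $U'$ by $\nu_Q$ (resp.\ $\nu_P$) sends some directions into the unipotent radical of $P$ (resp.\ of $Q$) and the rest into opposite directions, and the resulting $P$- and $Q$-components yield explicit monomials in $q^{-w}$, $q^{-s_1}$, $q^{-s_2}$ via $\delta_P^w$ and $\delta_{P_{3,1}}^{s_1}\delta_{P_{1,3}}^{s_2}$, depending on $\lambda$ and on the valuations of the coordinates of $u$.

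I would then carry out the residual integral over $U'(F_v)$. The additive character $\chi(u)=\psi(u_{12}+u_{23}+u_{34})$, combined with the support conditions forced by the two Iwasawa decompositions, turns each one-dimensional unipotent integral into either a volume or a finite geometric correction in $q$, and simultaneously restricts the surviving $\lambda$ to a cone in $\Lambda$. After this step, $I_v$ is expressed as a sum over that cone of the Schur polynomials $s_\lambda(\alpha)$ weighted by monomials in $q^{-w}$, $q^{-s_1}$, $q^{-s_2}$.

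The main obstacle is the evaluation of this final sum, which must be shown to equal the asserted ratio
\[\frac{L(\pi_v,\mathrm{Std},2w+s_1-s_2-\tfrac12)\,L(\pi_v,\wedge^2,2s_1+2s_2-1)\,L(\pi_v,\wedge^3,2w-s_1+s_2-\tfrac12)}{\zeta_{F_v}(4w)\zeta_{F_v}(4s_1)\zeta_{F_v}(4s_2)\zeta_{F_v}(4w-1)\zeta_{F_v}(4s_1+4s_2-2)}.\]
I would prove the needed identity by recognizing the cone-sum of Schur polynomials against the geometric data as a Cauchy-type summation whose value is the product of the Euler factors $\prod_{i_1<\dots<i_k}(1-\alpha_{i_1}\cdots\alpha_{i_k}q^{-s})^{-1}$ defining the exterior-power $L$-functions $L(\pi_v,\wedge^k,\cdot)$ for $k=1,2,3$; the finite factors $(1-q^{-4w})$, $(1-q^{-4s_1})$, $(1-q^{-4s_2})$, $(1-q^{-(4w-1)})$, $(1-q^{-(4s_1+4s_2-2)})$ appearing in the denominator emerge as the boundary and normalizing corrections produced by the Casselman--Shalika denominator and by the degeneracy of the inducing data. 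The delicate point throughout is the bookkeeping: one must track $\delta_B$, $\delta_P$, $\delta_{P_{3,1}}$, and $\delta_{P_{1,3}}$ on the relevant cocharacters precisely enough that the three linear combinations $2w+s_1-s_2-\tfrac12$, $2s_1+2s_2-1$, and $2w-s_1+s_2-\tfrac12$ come out exactly. A practical way to isolate and confirm these shifts is to verify the identity first on the fundamental dominant cocharacters and then extend by the multiplicativity of the generating function.
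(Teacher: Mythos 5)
Your reduction steps coincide with the paper's own proof: right $K$-invariance plus the Iwasawa decomposition, the observation that $R^0=[U_B,U_B]$ so that $R^0\backslash U_B$ is represented by the three simple root groups $n_{12}(x)n_{23}(y)n_{34}(z)$, evaluation of the degenerate sections at $\nu_Q n_{23}(y)$, $\nu_P n_{12}(x)$, $\nu_P n_{34}(z)$ by a rank-one Iwasawa computation, and the finite geometric factors coming from integrals such as $\int_{|y|>1}\psi(by)|y|^{-4w}\,dy$, with the Whittaker support and the Casselman--Shalika formula cutting the torus sum down to the dominant cone. Up to that point your plan is sound and is exactly what the paper does.

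The gap is in the final evaluation, which is the actual mathematical content of the theorem. What must be proved is the identity
\begin{align*}
&\sum_{\ell,m,n\ge 0} A_\pi[\ell,m,n]\,Y^m\left(\frac{1-(XZ)^{m+1}}{1-XZ}\right)\left(\frac{X^{\ell+1}-(YZ)^{\ell+1}}{X-YZ}\right)\left(\frac{Z^{n+1}-(XY)^{n+1}}{Z-XY}\right)\\
&\qquad=\left(\sum_{t,v\ge 0}A_\pi[t,0,v]\,X^tZ^v\right)\left(\sum_{u\ge 0}A_\pi[0,u,0]\,Y^u\right),
\end{align*}
where $A_\pi[\ell,m,n]$ denotes the character of the irreducible representation of $\SL_4(\C)$ with highest weight $\ell\omega_1+m\omega_2+n\omega_3$ evaluated at the Satake parameter; after this, the standard decompositions $\mathrm{Sym}^tV_4\otimes\mathrm{Sym}^v(\wedge^3V_4)\cong\bigoplus_k A[t-k,0,v-k]$ and $\mathrm{Sym}^u(\wedge^2V_4)\cong\bigoplus_k A[0,u-2k,0]$ convert the right-hand side into the asserted ratio of $L$-factors and zeta factors. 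This identity is not an off-the-shelf Cauchy summation: the geometric correction factors couple the three variables (for instance $X^{\ell+1}-(YZ)^{\ell+1}$ mixes $X$, $Y$, and $Z$), so the factorization of the sum into an $(X,Z)$-part and a $Y$-part is precisely the surprising point, and the paper devotes Lemma \ref{charTheory} to it, proved by a Littlewood--Richardson computation (decomposing the products $A[t,0,v]\cdot A[0,u,0]$ and reindexing the resulting sextuple sum). Your proposed substitute --- verifying the identity on the fundamental dominant cocharacters and ``extending by the multiplicativity of the generating function'' --- is not a valid argument: the characters are not multiplicative ($A[\ell,m,n]$ is not $A[\ell,0,0]\,A[0,m,0]\,A[0,0,n]$), and neither side of the identity is a product over the coordinates $\ell,m,n$, so agreement on $\omega_1,\omega_2,\omega_3$ determines nothing beyond degree one. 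Without a proof of this character-theoretic identity (or an equivalent symmetric-function argument), the proposal does not establish the theorem.
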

\begin{proof} Let $T$ denote the diagonal torus of $\GL_4$, a typical element of which is $t = \diag(t_1,t_2,t_3,t_4)$, and let $U_B$ denote the unipotent radical of the upper-triangular Borel of $\GL_4$.  Since everything is local, we write $F$ for $F_v$, $I$ for $I_v$, $W$ for $W^{\chi,v}_\phi$, $f_P$ and $f_Q$ for $f_{P,v}$ and $f_{Q,v}$, $p$ for a uniformizer of $\mathcal O_F$, and $|\,\,|$ for the absolute value on $F$ such that $|p|$ is the size of $\mathcal O_F$ modulo its maximal ideal. Then $I(W,s_1,s_2,w)$ is equal to
\begin{align*}& \int_{Z\backslash T}\delta_B^{-1}(t)W(t)\int_{R^0\backslash U_B}{\chi(u)f_P(\nu_Qut,w)f_Q(\nu_Put,s_1,s_2)\,du}\,dt\\ =&\int_{Z\backslash T}\delta_B^{-1}(t)W(t)\left|\frac{t_1}{t_4}\right|\int_{R^0\backslash U_B}{\chi(tut^{-1})f_P(\nu_Qtu,w)f_Q(\nu_Ptu,s_1,s_2)\,du}\,dt \\ =& \int_{Z\backslash T}\delta_B^{-1}(t)W(t)\left|\frac{t_1}{t_4}\right|\left|\frac{t_1t_3}{t_2t_4}\right|^{2w}\left|\frac{t_2t_1t_4}{t_3^3}\right|^{s_1}\left|\frac{t_2^3}{t_1t_4t_3}\right|^{s_2}\int_{R^0\backslash U_B}{\chi(tut^{-1})f_P(\nu_Qu,w)f_Q(\nu_Pu,s_1,s_2)\,du}\,dt. \end{align*}
Parametrize $Z\backslash T$ by $(a,b,c) \in F^\times \times F^\times \times F^\times$ with $(a,b,c)$ mapping to the diagonal element $t(a,b,c) = \diag(abc,bc,c,1)$.  As usual, set $K = \delta_B^{-1/2}W$.  With $t = t(a,b,c)$, then
\[\delta_B^{-1}(t)W(t)\left|\frac{t_1}{t_4}\right|\left|\frac{t_1t_3}{t_2t_4}\right|^{2w}\left|\frac{t_2t_1t_4}{t_3^3}\right|^{s_1}\left|\frac{t_2^3}{t_1t_4t_3}\right|^{s_2} = K(t(a,b,c))|ac|^{2w-\frac{1}{2}}\left|\frac{ab^2}{c}\right|^{s_1-\frac{1}{4}}\left|\frac{b^2c}{a}\right|^{s_2-\frac{1}{4}},\]
and the vanishing properties of $W$ imply this expression is 0 unless $a,b,c \in \mathcal O_F$.

With $t= t(a,b,c)$ and $a,b,c \in \mathcal O_F$, we compute the inner integral.  For $r \in F$, let $n_{ij}(r) = 1 + rE_{ij}$ and let $E_{ij}$ denote the matrix with $1$ at the $(i,j)$ position and $0$'s elsewhere.  Then $R^0 \backslash U_B$ is isomorphic to $F \times F \times F$ via the map sending $(x,y,z)$ to $n_{12}(x)n_{23}(y)n_{34}(z)$.  Observe that the two-by-two matrix $\mm{1}{}{r}{1}$ is right $K$-equivalent to $\mm{1}{r^{-1}}{}{1} \mm{r^{-1}}{}{}{r}$ when $|r| > 1$.  It follows that $f_P(\nu_Q u,w) = f_P(\nu_Q n_{23}(y),w) = |y|^{-4w}$ if $|y| > 1$ and is $1$ if $|y| \leq 1$.  Similarly, $f_Q(\nu_Pu,s_1,s_2) = f_Q(\nu_P n_{12}(x),s_1,s_2)f_Q(\nu_P n_{34}(z),s_1,s_2)$.  The section $f_Q(\nu_P n_{12}(x),s_1,s_2)$ is 1 when $|x| \leq 1$ and is $|x|^{-4s_2}$ when $|x| > 1$, and $f_Q(\nu_P n_{34}(z),s_1,s_2)$ is 1 when $|z| \leq 1$ and $|z|^{-4s_1}$ when $|z| > 1$.

The inner integral is thus
\begin{align*}
  & \left(1 + \int_{|y| > 1}{\psi(by)|y|^{-4w}\,dy}\right)\left(1 + \int_{|x| > 1}{\psi(ax)|x|^{-4s_2}\,dx}\right)\left(1 + \int_{|z| > 1}{\psi(cz)|z|^{-4s_1}\,dz}\right)\\ =&\frac{1}{\zeta(4w)\zeta(4s_1)\zeta(4s_2)}\left(\frac{1-|p|^{(m+1)(4w-1)}}{1-|p|^{4w-1}}\right)\left(\frac{1-|p|^{(\ell+1)(4s_2-1)}}{1-|p|^{4s_2-1}}\right)\left(\frac{1-|p|^{(n+1)(4s_1-1)}}{1-|p|^{4s_1-1}}\right),\end{align*}
where $\ell, m, n$ are the $p$-adic valuations of $a,b,c$.

Let us write $K[\ell,m,n]$ for $K(t(p^\ell,p^m,p^n))$. We deduce that $I(W,s_1,s_2,w)$ is
\begin{align*}\frac{1}{\zeta(4w)\zeta(4s_1)\zeta(4s_2)}&\sum_{\ell,m,n \geq 0}{K[\ell,m,n]|p|^{(\ell+n)(2w-\frac{1}{2})}|p|^{(\ell+2m-n)(s_1-\frac{1}{4})}|p|^{(-\ell+2m+n)(s_2-\frac{1}{4})}} \\ & \times \left(\frac{1-|p|^{(m+1)(4w-1)}}{1-|p|^{4w-1}}\right)\left(\frac{1-|p|^{(\ell+1)(4s_2-1)}}{1-|p|^{4s_2-1}}\right)\left(\frac{1-|p|^{(n+1)(4s_1-1)}}{1-|p|^{4s_1-1}}\right).\end{align*}
Set $X= |p|^{(2w-\frac{1}{2})+(s_1-\frac{1}{4})-(s_2-\frac{1}{4})}$, $Y = |p|^{2[(s_1-\frac{1}{4})+(s_2-\frac{1}{4})]}$, and $Z =|p|^{(2w-\frac{1}{2})-(s_1-\frac{1}{4})+(s_2-\frac{1}{4})}$.  Then $XZ = |p|^{4w-1}$, $\frac{YZ}{X} = |p|^{4s_2-1}$, $\frac{XY}{Z} = |p|^{4s_1-1}$, and the above is the reciprocal of the zeta factors times
\[\sum_{\ell, m,n \geq 0}{K[\ell,m,n]Y^m\left(\frac{1-(XZ)^{m+1}}{1-XZ}\right)\left(\frac{X^{\ell+1}-(YZ)^{\ell+1}}{X-YZ}\right)\left(\frac{Z^{n+1}-(XY)^{n+1}}{Z-XY}\right)}.\]
By Lemma \ref{charTheory} below and the Casselman-Shalika formula, this is
\[\left(\sum_{t,v \geq 0}{A_\pi[t,0,v]X^tZ^v}\right)\left(\sum_{u \geq 0}{A_\pi[0,u,0]Y^u}\right)\]
where $A_\pi[n_1,n_2,n_3]$ is the character $A[n_1,n_2,n_3]$ defined in the statement of Lemma \ref{charTheory} evaluated on the conjugacy class associated to $\pi_v$ in $\SL_4(\C)$.  This product is
\[\frac{L(\pi_v,\mathrm{Std},2w+s_1-s_2-\frac{1}{2})L(\pi_v,\wedge^3,2w-s_1+s_2-\frac{1}{2})}{\zeta_{F_v}(4w-1)}\frac{L(\pi_v,\wedge^2,2s_1+2s_2-1)}{\zeta_{F_v}(4s_1+4s_2-2)},\]
so the theorem follows.
\end{proof}
Denote by $\omega_1, \omega_2, \omega_3$ the fundamental weights of $\SL_4$, so that $\omega_1$ is the highest weight of the standard representation, $\omega_2$ is the highest weight of $\wedge^2$, and $\omega_3$ is the highest weight of $\wedge^3$ or the dual to the standard representation.
\begin{lem}\label{charTheory}Write $A[n_1,n_2,n_3]$ for the character of the irreducible representation of $\SL_4$ with highest weight $n_1\omega_1 + n_2\omega_2 + n_3\omega_3$.  Then one has the identity of power series
\begin{align}\nonumber \sum_{\ell, m,n \geq 0}A[\ell,m,n]Y^m & \left(\frac{1-(XZ)^{m+1}}{1-XZ}\right)\left(\frac{X^{\ell+1}-(YZ)^{\ell+1}}{X-YZ}\right)\left(\frac{Z^{n+1}-(XY)^{n+1}}{Z-XY}\right)\\
\label{eqn:charTheory} =& \left(\sum_{t,v \geq 0}{A[t,0,v]X^tZ^v}\right)\left(\sum_{u \geq 0}{A[0,u,0]Y^u}\right)\end{align}
\end{lem}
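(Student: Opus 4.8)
The plan is to read the identity as one between symmetric Laurent polynomials in the Satake parameters $z_1,z_2,z_3,z_4$ of $\pi_v$ (a conjugacy class in $\SL_4(\mathbf C)$, so $z_1z_2z_3z_4=1$), using that $A[\ell,m,n]$ is the Schur polynomial $s_{(\ell+m+n,\,m+n,\,n,\,0)}(z_1,z_2,z_3,z_4)$. The one computational device I would use throughout is the geometric-series evaluation $\sum_{k\ge0}p^k h_k(a,b)=\frac{1}{(1-pa)(1-pb)}$, where $h_k(a,b)=\frac{a^{k+1}-b^{k+1}}{a-b}$ is the complete homogeneous symmetric polynomial of degree $k$ in two variables. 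First I would note that the three rational factors in the summand are exactly such $h$'s: $Y^m\frac{1-(XZ)^{m+1}}{1-XZ}=h_m(Y,XYZ)$, while $\frac{X^{\ell+1}-(YZ)^{\ell+1}}{X-YZ}=h_\ell(X,YZ)$ and $\frac{Z^{n+1}-(XY)^{n+1}}{Z-XY}=h_n(Z,XY)$, so the left side becomes $\sum_{\ell,m,n}A[\ell,m,n]\,h_\ell(X,YZ)h_m(Y,XYZ)h_n(Z,XY)$.

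Next I would multiply by the Weyl denominator $\Delta=\prod_{i<j}(z_i-z_j)$ and insert the bialternant (Weyl character / Casselman--Shalika) form $A[\ell,m,n]=a_{(\ell+m+n+3,\,m+n+2,\,n+1,\,0)}/\Delta$, expanding the alternant as $\sum_{\sigma\in S_4}\operatorname{sgn}(\sigma)\prod_j z_{\sigma(j)}^{\mu_j}$. For each fixed $\sigma$ the dependence on $\ell,m,n$ separates: $\ell$ enters only through $z_{\sigma(1)}^{\ell}$, $m$ only through $(z_{\sigma(1)}z_{\sigma(2)})^m$, and $n$ only through $(z_{\sigma(1)}z_{\sigma(2)}z_{\sigma(3)})^n$. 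Writing $u_1=z_{\sigma(1)}$, $u_2=z_{\sigma(1)}z_{\sigma(2)}$, $u_3=z_{\sigma(1)}z_{\sigma(2)}z_{\sigma(3)}$ for the partial products, summing each geometric series with the device above collapses the triple sum to
\[\Delta\cdot(\text{LHS})=\sum_{\sigma\in S_4}\operatorname{sgn}(\sigma)\,\frac{u_1}{(1-u_1X)(1-u_1YZ)}\cdot\frac{u_2}{(1-u_2Y)(1-u_2XYZ)}\cdot\frac{u_3}{(1-u_3Z)(1-u_3XY)},\]
an explicit antisymmetric rational function; here the $\rho$-shift $z_{\sigma(1)}^3z_{\sigma(2)}^2z_{\sigma(3)}$ is precisely $u_1u_2u_3$.

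In parallel I would produce closed forms for the two factors on the right. The $\Spin_6=\SL_4$ harmonic decomposition $\mathrm{Sym}^u(\wedge^2)=\bigoplus_{j\ge0}A[0,u-2j,0]$ gives $\sum_u A[0,u,0]Y^u=\frac{1-Y^2}{\prod_{i<j}(1-Yz_iz_j)}$, and the decomposition $\mathrm{Sym}^t(\mathrm{std})\otimes\mathrm{Sym}^v(\wedge^3)=\bigoplus_{j\ge0}A[t-j,0,v-j]$ gives $\sum_{t,v}A[t,0,v]X^tZ^v=\frac{1-XZ}{\prod_i(1-Xz_i)\prod_i(1-Z/z_i)}$. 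Thus the right side equals $\frac{(1-XZ)(1-Y^2)}{D}$ with $D=\prod_i(1-Xz_i)\prod_i(1-Z/z_i)\prod_{i<j}(1-Yz_iz_j)$, and the lemma reduces to showing the antisymmetric sum above equals $\Delta\cdot\frac{(1-XZ)(1-Y^2)}{D}$.

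The hard part is exactly this last matching: the per-$\sigma$ summand carries the extra denominator factors $(1-u_1YZ)$, $(1-u_2XYZ)$, $(1-u_3XY)$ that do not appear in $D$, so one must show that after antisymmetrization all of these spurious poles cancel, and that the residues at the genuine poles $z_i=1/X$, $z_i=Z$, and $z_iz_j=1/Y$ agree with those of $\frac{(1-XZ)(1-Y^2)}{D}$. I would carry this out by viewing both sides as rational functions of a single $z_i$ with the others as parameters and comparing poles and residues (a finite check, streamlined by the $\Spin_6$ symmetry pairing each $z_iz_j$ with its reciprocal), or, avoiding residues, by clearing denominators and verifying the resulting polynomial identity. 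I expect the geometric-series collapse and the two product formulas to be routine; organizing the cancellation of the spurious poles in this final step is where I anticipate the real obstacle.
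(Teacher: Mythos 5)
Your preliminary reductions are all correct: the three rational factors are indeed $h_\ell(X,YZ)$, $h_m(Y,XYZ)$, and $h_n(Z,XY)$; the bialternant form of $A[\ell,m,n]$ together with $\sum_{k\geq 0}p^k h_k(a,b)=\frac{1}{(1-pa)(1-pb)}$ collapses the left side of (\ref{eqn:charTheory}) to your signed sum; and the product formulas $\sum_u A[0,u,0]Y^u=\frac{1-Y^2}{\prod_{i<j}(1-Yz_iz_j)}$ and $\sum_{t,v}A[t,0,v]X^tZ^v=\frac{1-XZ}{\prod_i(1-Xz_i)\prod_i(1-Z/z_i)}$ follow from the two classical decompositions you cite. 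The genuine gap is the final step. Writing $u_1=z_{\sigma(1)}$, $u_2=z_{\sigma(1)}z_{\sigma(2)}$, $u_3=z_{\sigma(1)}z_{\sigma(2)}z_{\sigma(3)}$, $\Delta=\prod_{i<j}(z_i-z_j)$, and $D=\prod_i(1-Xz_i)\prod_i(1-Z/z_i)\prod_{i<j}(1-Yz_iz_j)$, what remains to be proved is
\[
\sum_{\sigma\in S_4}\operatorname{sgn}(\sigma)\,\frac{u_1u_2u_3}{(1-u_1X)(1-u_1YZ)(1-u_2Y)(1-u_2XYZ)(1-u_3Z)(1-u_3XY)}\;=\;\frac{(1-XZ)(1-Y^2)}{D}\,\Delta,
\]
and this is not a detail that your reductions have made routine: it carries the entire content of the lemma (combined with the Casselman--Shalika formula it is essentially a restatement of Theorem \ref{thm:unramgl4}). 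Your proposed method---fix one $z_i$ and compare poles and residues, or clear denominators---is only named, never executed. To execute it you must (i) prove that the spurious poles at $z_{\sigma(1)}=(YZ)^{-1}$, at $z_{\sigma(1)}z_{\sigma(2)}=(XYZ)^{-1}$, and at $z_{\sigma(4)}=XY$ cancel in the signed sum, which requires pairing permutations against one another and using the constraint $z_1z_2z_3z_4=1$ in an essential way (note that $1-u_3Z$ gives a genuine pole $z_{\sigma(4)}=Z$ only because $u_3=1/z_{\sigma(4)}$); (ii) match the residues at the genuine poles $z_i=1/X$, $z_i=Z$, $z_iz_j=1/Y$ with those of the right-hand side; and (iii) show that the resulting pole-free antisymmetric difference vanishes, for instance by a degree count in each $z_i$. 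None of this is carried out, and you flag it yourself as ``where I anticipate the real obstacle,'' so what you have is a correct reformulation of the lemma rather than a proof of it.

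For comparison, the paper's proof never leaves the character ring and therefore never meets these cancellation problems: it computes the product $A[t,0,v]\cdot A[0,u,0]$ explicitly via the Littlewood--Richardson rule, expands the coefficient of each $A[\ell,m,n]$ on the left of (\ref{eqn:charTheory}) as the triple geometric sum $\sum_{\alpha,\beta,\gamma}X^{\ell-\alpha+\beta+\gamma}Y^{m+\alpha+\gamma}Z^{n+\alpha+\beta-\gamma}$, and matches the two sides by an explicit linear change of summation indices. Everything reduces to a finite bijection of index sets, with no rational-function identities in the Satake parameters. If you wish to complete your route, the computation in (i)--(iii) must be done in full---expect it to be at least as long as the paper's whole proof---or you would need to locate the displayed identity in the literature on Gindikin--Karpelevich/Casselman--Shalika type sums and cite it.
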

\begin{proof} We first claim that the coefficient of $X^tY^uZ^v$ on the right-hand side of (\ref{eqn:charTheory}) is
\begin{equation} \label{eqn:charTheoryclaim1}A[t,0,v]A[0,u,0] = \sum_{\substack{0 \le k \le i \le t\\ 0 \le j \le u-i \\ u - v \le j+k}} A[t+u-2i-j,i+j-k,v-u+j+2k].\end{equation}
This is obtained by applying the Littlewood-Richardson rule.  The terms on the right hand side correspond to strict extensions of the Young diagram $D_{t,0,v}$ with rows of size $(t+v,v,v)$ by the Young diagram $D_{0,u,0}$ with rows of size $(u,u)$.  The parameters $i,j,$ and $k$ correspond respectively to the number of boxes labeled 1 added to the second row of $D_{t,0,v}$, the number of boxes labeled 2 added to the second row of $D_{t,0,v}$, and the number of boxes labeled 2 added to the third row of $D_{t,0,v}$.  These three parameters completely determine the extension since the remainder of the 1's and 2's must go in the top row and fourth row respectively.  The inequalities and the weight of the representation are easily computed from the resulting diagram.

The coefficient of $A[\ell,m,n]$ on the left-hand side of (\ref{eqn:charTheory}) can be rewritten as
\begin{equation} \label{eqn:gseries} \(\sum_{\alpha = 0}^\ell X^{\ell-\alpha}Y^\alpha Z^\alpha\)\(\sum_{\beta=0}^m X^\beta Y^m Z^\beta\) \(\sum_{\gamma=0}^n X^\gamma Y^\gamma Z^{n-\gamma}\) = \sum_{\substack{\alpha\in[0,\ell]\\\beta\in[0,m]\\\gamma\in[0,n]}} X^{\ell-\alpha+\beta+\gamma}Y^{m+\alpha+\gamma}Z^{n+\alpha+\beta-\gamma}.\end{equation}
We need to identify the left-hand side of (\ref{eqn:charTheory}) with
\begin{equation} \label{eqn:goal} \sum_{t,u,v \ge 0} \sum_{\substack{0 \le k \le i \le t\\ 0 \le j \le u-i \\ u - v \le j+k}} A[t+u-2i-j,i+j-k,v-u+j+2k].\end{equation}

Rearranging the summation in (\ref{eqn:charTheory}) using (\ref{eqn:gseries}) and the change of variable $\ell = \alpha+\alpha',m=\beta+\beta',$ and $n=\gamma+\gamma'$, we obtain
\begin{equation} \label{eqn:rearr} \sum_{\alpha,\beta,\gamma,\alpha',\beta',\gamma' \ge 0} A[\alpha+\alpha',\beta+\beta',\gamma+\gamma']X^{\alpha'+\beta+\gamma}Y^{\alpha+\beta+\beta'+\gamma}Z^{\alpha+\beta+\gamma'}.\end{equation}
If one instead starts with (\ref{eqn:goal}), applies the substitutions $\alpha = u-i-j$, $\alpha' = t-i$, $\beta = i-k$, $\beta'=j$, $\gamma = k$, and $\gamma' = v-u+j+k$, and rearranges the resulting system of inequalities, one again arrives at (\ref{eqn:rearr}).
\end{proof}

\section{Two-variable Rankin-Selberg integral for $\GU(2,2)$} \label{sec:2var}

In this section we give the two-variable integral on $\GU(2,2)$ mentioned above.  This integral is the quasi-split version of the three-variable integral on $\GL_4$. Since many of the computations are identical to those in Section \ref{sec:3var}, we only provide brief proofs.

\subsection{Notation} Let $E/F$ be a quadratic extension of fields.  Denote by $J_4$ the matrix
\[J_4=\(\begin{array}{cc|cc} & & &1 \\  & &1 & \\ \hline  &-1 & &\\  -1 & & & \end{array}\).\]
We define $G = \GU(2,2)_{/F}$ to be the algebraic group over $F$ consisting of the elements $(g,\nu(g)) \in (\mathrm{Res}_F^E\GL_4) \times \GL_{1/F}$ that satisfy $g J_4 {}^*g = \nu(g) J_4$, where ${}^*g$ denotes conjugate transpose.

We denote by $(W_4,J_4)$ the four dimensional skew-Hermitian vector space over $E$, which is the defining representation of $G$.  The group $G$ acts on the right of $W_4$.  We write $\set{e_1, e_2, f_2, f_1}$ for the ordered basis of $W_4$ and write $\langle \;,\; \rangle$ for the skew-Hermitian form associated to $J_4$.  Thus $\langle \lambda w_1, \mu w_2\rangle = \lambda \overline{\mu} \langle w_1, w_2 \rangle$ for $w_1, w_2 \in W_4$ and $\lambda, \mu$ in $E$.  Moreover, $\langle e_i, f_j \rangle = \delta_{ij} = -\langle f_j, e_i \rangle$ and $\langle e_i, e_j \rangle = \langle f_i, f_j \rangle = 0$ for all $i,j \in \set{1,2}$.

\subsection{Dual groups and $L$-functions}\label{Lfcns}  The integral representation studied in this section will produce a degree $6$ (exterior square) and degree $8$ (standard) $L$-function on $\mathrm{PGU}(2,2)$. We define the dual groups and $L$-functions, and then provide more explicit descriptions at places where $\pi_v$ is unramified.

\subsubsection{Dual groups}
Recall that the dual group of $G =\GU(2,2)$ is
\[{}^LG=(\GL_1(\C) \times \GL_4(\C)) \rtimes \gal(E/F).\]
The nontrivial element $\theta$ of $\gal(E/F)$ acts on $\GL_1(\C) \times \GL_4(\C)$ by
\[(\lambda, g) \mapsto (\lambda \det(g), \Phi_4 {}^{t}g^{-1}\Phi_4^{-1}),\]
where
\[\Phi_4 = \(\begin{array}{cccc} & & & 1 \\ & & -1 & \\ & 1 & & \\ -1 & & & \end{array}\).\]
(See, for instance, \cite{skinner}.)  The adjoint group $\mathrm{PGU}(2,2)$ of $G$ has dual group given by the derived subgroup $\SL_4(\C) \rtimes \gal(E/F)$ of ${}^LG$.

\subsubsection{Exterior square representation of $^{L}G$}

We now define the ``exterior square'' representation of $^{L}G$.  A very clear discussion of this representation is given in \cite[Section 2]{fm}.  For the convenience of the reader, we now recall this discussion, following \cite{fm} closely.

Let $V_4$ denote the standard $4$-dimensional representation of $\GL_4(\C)$, and let ${}^LG^\circ=\GL_1(\C) \times \GL_4(\C)$ act on $\wedge^2 V_4 \cong \mathbf{C}^6$ via $(\lambda, g) \mapsto \lambda \wedge^2\!(g)$.  Denote this representation by $\rho^\circ: {}^LG^\circ \rightarrow\GL_6(\mathbf{C})$.  Then $x \mapsto \rho^\circ(x)$ and $x \mapsto \rho^\circ(\theta^{-1} x \theta)$ are irreducible representations of $\GL_1(\C) \times \GL_4(\C)$ with the same highest weight, and thus are isomorphic.  Hence, there is an element $A \in \GL(\wedge^2 V_4)$ satisfying $\rho^\circ(\theta^{-1} x \theta) = A^{-1} \rho^\circ(x) A.$  Since $A$ and $-A$ induce the same conjugation action, we can and do choose $A$ to have positive trace.

For example, if one uses the ordered bases
\begin{equation} \label{eqn:vbasis} \set{v_1,v_2,v_3,v_4}\textrm{ for }V_4\textrm{ and }\set{v_1 \wedge v_2, v_1 \wedge v_3, v_1 \wedge v_4, v_2 \wedge v_3,v_2 \wedge v_4, v_3 \wedge v_4}\textrm{ for }\wedge^2 V_4,\end{equation}
then $A$ may be represented by the block diagonal matrix
\begin{equation} \label{eqn:defa} A=\diag\(\mathbf{1}_2,\mb{}{1}{1}{},\mathbf{1}_2\),\end{equation}
where $\mathbf{1}_2$ is the $2\times 2$ identity matrix.

The map that sends $x = (\lambda, g)$ to $\rho^\circ(x)$ and $(1,1) \rtimes \theta$ to $A$ defines a representation $\rho:{}^LG \rightarrow \GL_6(\mathbf{C})$, where ${}^{L}G = {}^LG^\circ \rtimes \gal(E/F) = (\GL_1(\C) \times \GL_4(\C)) \rtimes \gal(E/F)$.  This representation is what we refer to as the exterior square; we denote it by $\wedge^2$.

\begin{remark} Had we chosen $A$ with negative trace, we would get a \emph{different} six-dimensional representation $\wedge^{2}_\mathrm{neg}$ of ${}^{L}G$.  The results below pertain to $\wedge^2$, not $\wedge^{2}_\mathrm{neg}$. \end{remark}

The standard representation of ${}^LG$ has a simpler definition.  If $x = (\lambda,g) \in {}^LG^\circ$, let $\rho_\mathrm{std}^\circ(x) = \lambda g \in \GL(V_4)$.  Then the standard representation $\rho_\mathrm{std}$ is the $8$-dimensional representation $\rho_\mathrm{std}:{}^LG \rightarrow \GL_8(\mathbf{C})$ defined by $\rho_\mathrm{std} = \ind_{{}^{L}G^\circ}^{{}^{L}G}\rho_\mathrm{std}^\circ$. 

\subsubsection{Unramified $L$-factors}

Suppose that for a finite place $v$, $\pi_v$ is an unramified representation of $\mathrm{GU}(2,2)$.  We consider two cases.

{\bf Case 1} ($v$ splits in $E$):
If $v$ splits in $E$, then there is an isomorphism $\mathbf{Q}_v \otimes_\mathbf{Q} E \rightarrow \mathbf{Q}_v \oplus \mathbf{Q}_v$.  This identification induces projections $p_i:\mathrm{GU}(2,2)_{/\mathbf{Q}_v}\rightarrow \GL_{4/\mathbf{Q}_v}$ corresponding to each factor $i \in \set{1,2}$, where for a $\mathbf{Q}$-algebra $R$ and group $G$ over $\mathbf{Q}$ we write $G_{/R}$ for its scalar extension to $R$.  We obtain an isomorphism $\mathrm{GU}(2,2)_{/\mathbf{Q}_v} \cong \GL_{1/\mathbf{Q}_v} \times \GL_{4/\mathbf{Q}_v}$ defined by mapping $g$ to the pair $(\mu(g),p_1(g))$.  This induces a unique identification between the dual groups of $\GU(2,2)_{/\mathbf{Q}_v}$ and $\GL_{1/\mathbf{Q}_v} \times \GL_{4/\mathbf{Q}_v}$, so we may use ${}^{L}G^{\circ} = \GL_1(\C) \times \GL_4(\C)$ to define the local $L$-factor.

Thus for $v$ split, we have
\[L(\pi_v,\wedge^2,s) = L(\pi_v',\lambda \times \wedge^2,s)\textrm{ and }L(\pi_v,\mathrm{Std},w) = L(\pi_v',\lambda \times \mathrm{Std},w)L(\pi_v',\lambda \times \wedge^3,w).\]
Here, $\pi_v'$ denotes the representation $\pi_v$ thought of as a representation of $\GL_1 \times \GL_4$, the notation $\lambda \times \rho$ denotes the tensor product of the tautological 1-dimensional representation of $\GL_1(\mathbf{C})$ with a representation $\rho$ of $\GL_4$, and the $L$-functions of $\pi_v'$ are those considered in Section \ref{sec:3var}.

{\bf Case 2} ($v$ is inert in $E$):
If the finite place $v$ is inert in $E$ and $\pi_v$ is unramified, then the exterior square and standard $L$-functions are related to $L$-functions for an embedded symplectic group.  Let $T$ denote the diagonal torus of $G$.  Define $G' = \GSp_4 \subseteq G$ to be the group of matrices $g \in \GL_{4/F}$ that satisfy $g J_4 {}^tg = \nu(g) J_4$ for some $\nu(g) \in F^\times$, and denote by $T' = T \cap G'$ the diagonal torus of $G'$.

The dual group of $G'$ is ${}^LG'=\mathrm{GSpin}_5(\C)$.  We write Spin for its $4$-dimensional spin representation; under the isomorphism $\mathrm{GSpin}_5(\C) \simeq \GSp_4(\C)$, this is the defining four-dimensional representation of $\GSp_4(\C)$.  We denote by Std the five-dimensional representation of $\mathrm{GSpin}_5(\C)$ that sits inside the exterior square of the Spin representation.  Letting $\mu: \mathrm{GSpin}_5(\C) \rightarrow \GL_1(\C)$ denote the similitude, $\text{Std} \otimes \mu^{-1}$ factors through $\mathrm{GSpin}_5(\C) \rightarrow \SO_5(\C)$ and has an invariant quadratic form.

The following well-known proposition relates $L$-functions for $G$ to $L$-functions for $G'$ when $v$ is inert in $E$.
\begin{prop}\label{gsp4L} Suppose that $v$ is inert in $E$, $\alpha$ is an unramified character of $T$, and $\pi_v$ is an unramified irreducible subquotient of $\ind_B^{G}(\delta_B^{1/2}\alpha)$.  Denote by $\alpha'$ the restriction of $\alpha$ to $T'$, and suppose that $\pi_v'$ is an unramified irreducible subquotient of $\ind_{B'}^{G'}(\delta_{B'}^{1/2}\alpha')$.  Then if $\pi_v$ has central character $\omega_\pi$,
\[L(\pi_v, \wedge^2,s) = L(\pi_v',\mathrm{Spin},s)L(\omega_\pi,2s)\]
and
\[L(\pi_v,\mathrm{Std},w) = \frac{L(\pi_v',\mathrm{Std},2w)}{L(\omega_\pi,2w)}.\] \end{prop}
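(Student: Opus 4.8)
The plan is to make everything explicit on the level of Satake parameters. Since $v$ is inert, the Frobenius--Satake parameter of $\pi_v$ is a semisimple conjugacy class lying in the \emph{nonidentity} component of ${}^LG$, i.e.\ an element of the form $s \rtimes \theta$ with $s = (\lambda, g) \in \GL_1(\C) \times \GL_4(\C)$; its $\GL_4(\C)$-part is determined by the unramified character $\alpha$ of $T$, and the relation $\alpha' = \alpha|_{T'}$ together with the explicit form of the torus $T' = T \cap \GSp_4$ pins down the corresponding parameter $s'$ of $\pi_v'$ in $\GSp_4(\C) \cong \mathrm{GSpin}_5(\C)$. First I would write down $s$ and $s'$ in coordinates adapted to the ordered basis $\set{e_1,e_2,f_2,f_1}$, recording how $\lambda$ and $\det(g)$ encode the central (similitude) character $\omega_\pi$. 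Once this dictionary is fixed, each of the two identities reduces to computing the characteristic polynomial of a representation evaluated at $s\rtimes\theta$ and comparing it with the corresponding product of $\GSp_4$-factors.

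For the standard $L$-function I would use that $\rho_{\mathrm{std}} = \ind_{{}^LG^\circ}^{{}^LG}\rho_{\mathrm{std}}^\circ$ is induced from an index-two subgroup. In the basis $V_4 \oplus \theta V_4$ the operator $\rho_{\mathrm{std}}(s \rtimes \theta)$ is block-antidiagonal, so its square is block-diagonal with blocks conjugate to the ``norm'' $N(s) \in \GL_4(\C)$ built from $s$ and its $\theta$-twist. Hence the eight eigenvalues of $\rho_{\mathrm{std}}(s\rtimes\theta)$ occur in pairs $\pm\sqrt{\mu_i}$, where $\mu_1, \dots, \mu_4$ are the eigenvalues of $N(s)$, and, writing $q$ for the cardinality of the residue field of $F_v$,
\[ L(\pi_v,\mathrm{Std},w) = \det\!\(1 - \rho_{\mathrm{std}}(s\rtimes\theta)\,q^{-w}\)^{-1} = \prod_{i=1}^4 \(1 - \mu_i\, q^{-2w}\)^{-1}. \]
This is where the factor of $2$ in the shift $2w$ appears. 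It remains to identify $N(s)$: I expect its four eigenvalues to be exactly the five $\mathrm{Std}$-eigenvalues of the $\GSp_4$-parameter $s'$ with the central eigenvalue $\omega_\pi(p)$ removed, which yields $L(\pi_v,\mathrm{Std},w) = L(\pi_v',\mathrm{Std},2w)/L(\omega_\pi,2w)$.

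For the exterior square, $\wedge^2$ is a genuine six-dimensional representation, with $\wedge^2(s\rtimes\theta) = \rho^\circ(s)\,A$ and $A$ as in (\ref{eqn:defa}). In the ordered basis (\ref{eqn:vbasis}) the matrix $A$ is the identity on the four vectors $v_1\wedge v_2, v_1 \wedge v_3, v_2\wedge v_4, v_3\wedge v_4$ and swaps the middle pair $v_1\wedge v_4 \leftrightarrow v_2 \wedge v_3$. On the middle $2\times 2$ block $\rho^\circ(s)A$ is therefore antidiagonal, with eigenvalues $\pm\sqrt{\lambda^2\det(g)} = \pm\sqrt{\omega_\pi(p)}$ after the identification above, contributing $(1 - \omega_\pi(p)q^{-2s})^{-1} = L(\omega_\pi,2s)$; on the remaining four basis vectors $\rho^\circ(s)$ is already diagonal, and I would check that these four eigenvalues coincide with those of the four-dimensional $\mathrm{Spin}$ representation of $\GSp_4(\C)$ on $s'$. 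Together these give $L(\pi_v,\wedge^2,s) = L(\pi_v',\mathrm{Spin},s)\,L(\omega_\pi,2s)$.

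The main obstacle is the bookkeeping in the first step: correctly pinning down the twisted Frobenius class $s \rtimes \theta$ --- in particular the Galois action on $T$ and the effect of $\theta^2$ --- so that the norm $N(s)$ and the antidiagonal middle block produce precisely the central character $\omega_\pi$ and the stated spectral shifts. The degree count ($8 = 2\cdot 4$ for $\mathrm{Std}$ and $6 = 4 + 2$ for $\wedge^2$) and the fact that $E_v/F_v$ is unramified (so that the norm of a uniformizer is a square) are the structural reasons behind the shifts $2w$ and $2s$; as a consistency check one can specialize the same eigenvalue computation to the split case treated above, where $\theta$ acts by an honest involution and the two identities degenerate to the corresponding $\GL_4$-statements.
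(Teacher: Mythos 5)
Your proposal is correct and is precisely the argument the paper has in mind: the paper omits the proof, stating only that it ``follows easily from the explicit choices of bases and the matrix $A$ given in (\ref{eqn:vbasis}) and (\ref{eqn:defa}),'' and that the first identity is implicit in the Ferrari--Morimoto-style reference \cite{fm}, which is exactly the twisted Satake-parameter computation you outline (block-antidiagonal action of $s\rtimes\theta$ on the induced representation for $\mathrm{Std}$, giving the norm $N(s)$ and the shift $w \mapsto 2w$, and the matrix $\rho^\circ(s)A$ with the middle swap for $\wedge^2$, giving $L(\omega_\pi,2s)$ times the $\mathrm{Spin}$ factor). The ``bookkeeping'' items you flag --- matching the eigenvalues of $N(s)$ with the nontrivial $\mathrm{Std}$-eigenvalues of $s'$, and identifying $\lambda^2\det(g)$ with $\omega_\pi(p)$ --- are exactly the routine verifications the authors deem easy enough to omit, and your degree counts ($8=2\cdot 4$, $6=4+2$) mirror the consistency check the paper itself records after the proposition.
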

The first part of this proposition is implicit in \cite{fm}.  Let $p=p_v$ be a uniformizer of $F_v$.  Note that the reciprocals of both sides of the first equality are of degree $6$ polynomials in $|p|_{F_v}^{s}$, and that the reciprocals of both sides of the second equality are degree four polynomials in $|p|_{F_v}^{2w}$ and thus degree eight polynomials in $|p|_{F_v}^{w}$.  We omit the proof of Proposition \ref{gsp4L}, which follows easily from the explicit choices of bases and the matrix $A$ given in (\ref{eqn:vbasis}) and (\ref{eqn:defa}) above.

\subsection{The global integral}
We define the Siegel parabolic $P$ of $G$ to be the stabilizer of the (isotropic) subspace $\langle f_1, f_2 \rangle \subseteq W_4$, and the Klingen parabolic $Q$ to be the stabilizer of the line $\langle f_1 \rangle$.  These are consistent with the shapes (\ref{eqn:siegelklingen}) from the introduction.  If $f_P \in \ind_{P(\A)}^{G(\A)}\delta_P^w$ and $f_Q \in \ind_{Q(\A)}^{G(\A)}\delta_Q^s$ are standard sections with respective factorizations $\prod_v f_{P,v}$ and $\prod_v f_{Q,v}$, then the Siegel and Klingen Eisenstein series are, respectively,
\[E_P(g,w) = \sum_{\gamma \in P(F) \backslash G(F)}{f_P(\gamma g,w)} \qquad \text{ and } \qquad E_Q(g,s) = \sum_{\gamma \in Q(F) \backslash G(F)}{f_Q(\gamma g,s)}.\]
The global integral is
\[I(\phi,s,w) = \int_{G(F)Z_{E}(\A)\backslash G(\A)}{\phi(g)E_P(g,w)E_Q(g,s)\,dg}.\]

The unfolding of $I(\phi,s,w)$ is identical to that of the three-variable integral considered in Section \ref{sec:3var}.  Namely, set
\[\nu_P=\(\begin{array}{cc|cc} & 1& & \\  1& & & \\ \hline  & & &1\\  & & 1& \end{array}\) \textrm{ and } \nu_Q=\(\begin{array}{c|cc|c} 1& & & \\ \hline  & &1 & \\  & -1& &\\ \hline  & & &1 \end{array}\).\]
Define $R = (\nu_P^{-1}Q\nu_P) \cap (\nu_Q^{-1}P\nu_Q)$.  Then $R$ is the subgroup of $G$ consisting of matrices of the form in (\ref{eqn:rform}).  Denote by $U_B \subseteq B$ the unipotent radical of the upper-triangular Borel of $G$, and suppose $\chi: U_B(F) \backslash U_B(\A) \rightarrow \C^\times$ is a nondegenerate character of $U_B$.  Define
\[W_\phi^\chi(g) = \int_{U_B(F) \backslash U_B(\A)}{\chi^{-1}(u) \phi(ug) \,du}\]
to be the $\chi$-Whittaker model of $\phi$.
\begin{prop} Suppose $\pi$ has trivial central character.  Denote by $Z_E$ the center of $G$, which is $E^\times$ embedded diagonally, and let $R^0$ be the unipotent radical of $R$. Then for any choice of nondegenerate character $\chi$ of $U_B$,
\begin{equation} \label{unfoldedInt}I(\phi,s,w) = \int_{R^0(\A)Z_E(\A) \backslash G(\A)}{W_\phi^\chi(g)f_P(\nu_Qg,w)f_Q(\nu_Pg,s) \,dg}.\end{equation}
\end{prop}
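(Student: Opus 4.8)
The plan is to reproduce, essentially verbatim, the unfolding of the $\GL_4$ integral from Proposition \ref{prop:unfoldinggl4}, exploiting that the Siegel parabolic $P$ and Klingen parabolic $Q$ of $\GU(2,2)$ carry the same shapes (\ref{eqn:siegelklingen}) that governed the $\GL_4$ computation. The hypothesis that $\pi$ has trivial central character is what makes $\phi(g)E_P(g,w)E_Q(g,s)$ a genuine function on $G(F)Z_E(\A)\backslash G(\A)$, so that $I(\phi,s,w)$ is well-defined. First I would expand $E_P(g,w)=\sum_{\gamma\in P(F)\backslash G(F)}f_P(\gamma g,w)$ and collapse this sum against the quotient to reduce the domain to $P(F)Z_E(\A)\backslash G(\A)$. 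I would then expand $E_Q$ and break the resulting integral according to the double cosets $Q(F)\backslash G(F)/P(F)$.

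The key combinatorial input is the analogue of Lemma \ref{bruhat} for $G=\GU(2,2)$. Here $G$ is quasi-split of relative rank two, with relative root system of type $C_2$ and relative Weyl group of order eight, and $P,Q$ are its two maximal parabolics. I would compute representatives for $Q(F)\backslash G(F)/P(F)$ via the relative Bruhat decomposition and check that, as the explicit matrices $\nu_P$ and $\nu_Q$ suggest, they match the four representatives of Lemma \ref{bruhat} now realized inside $G$, with the sign in $\nu_Q$ accounting for the skew-Hermitian form $J_4$. Exactly as in the $\GL_4$ case, every coset other than the open one represented by $\nu_P\nu_Q^{-1}$ has stabilizer in $P(F)$ containing the unipotent radical of a proper parabolic of $G$; since $\phi$ is cuspidal, its integral over each such unipotent radical vanishes, and only the open coset survives.

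On the open coset the integral becomes $\int \phi(g)f_P(g,w)f_Q(\nu_P\nu_Q^{-1}g,s)\,dg$ over $\bigl((\nu_Q\nu_P^{-1}Q(F)\nu_P\nu_Q^{-1})\cap P(F)\bigr)Z_E(\A)\backslash G(\A)$. The substitution $g\mapsto\nu_Q g$ conjugates the stabilizer into $R=(\nu_P^{-1}Q\nu_P)\cap(\nu_Q^{-1}P\nu_Q)$ and turns the integrand into $\phi(g)f_P(\nu_Q g,w)f_Q(\nu_P g,s)$, using that the modulus characters are trivial on $F$-rational points; the shape (\ref{eqn:rform}) of $R$ makes this transparent. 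Because $\nu_Q$ (resp.\ $\nu_P$) conjugates $R^0$ into the unipotent radical of $P$ (resp.\ $Q$), on which the modulus is trivial, the function $f_P(\nu_Q g,w)f_Q(\nu_P g,s)$ is left $R^0(\A)$-invariant. I would therefore form the inner integral of $\phi$ over $R^0(F)\backslash R^0(\A)$ and Fourier-expand along the unipotent directions of $U_B$ complementary to $R^0$; cuspidality annihilates the degenerate contributions and $T(F)$-equivariance folds the rest onto a single orbit, producing the nondegenerate coefficient $W_\phi^\chi$ and hence (\ref{unfoldedInt}).

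The one genuinely new point, and the step I expect to be the main obstacle, is the quasi-split Bruhat decomposition together with the stabilizer computation. For $\GL_4$ these reduced to a clean manipulation inside the symmetric group $S_4$; for $\GU(2,2)$ one must instead work with the relative Weyl group of a unitary group and verify both that the form $J_4$ introduces no additional double cosets and that the non-open stabilizers are precisely the promised unipotent radicals. Once this is checked, the remainder of the argument is formally identical to Proposition \ref{prop:unfoldinggl4}, which is why only a brief proof is warranted.
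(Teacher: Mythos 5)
Your outline coincides with the paper's own proof (compute the double cosets $Q(F)\backslash G(F)/P(F)$, kill the non-open contributions by cuspidality, change variables, and Fourier expand), but your central combinatorial claim is wrong: the double coset space has exactly \emph{two} elements, represented by $\{1,\nu_P\nu_Q^{-1}\}$, not four. By the relative Bruhat decomposition, $Q(F)\backslash G(F)/P(F)$ is in bijection with $W_Q\backslash W/W_P$, where $W$ is the \emph{relative} Weyl group; as you yourself note, $W$ is of type $C_2$, dihedral of order $8$, with $W_P=\langle s_1\rangle$ and $W_Q=\langle s_2\rangle$ generated by the two simple reflections. A direct check gives only two double cosets, namely $\langle s_2\rangle\cdot 1\cdot\langle s_1\rangle=\{1,s_1,s_2,s_2s_1\}$ and $\langle s_2\rangle\cdot s_1s_2\cdot\langle s_1\rangle$, which contains the remaining four elements. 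Your assertion that the four representatives of Lemma \ref{bruhat} are ``realized inside $G$'' fails concretely: $\gamma_1=(123)$ and $\gamma_2=(243)$ do not lie in $\GU(2,2)(F)$ for any adjustment of signs, because they do not preserve the pairing structure of the basis $\set{e_1,e_2,f_2,f_1}$ --- for instance, $(123)$ carries the pair $(b_1,b_4)$, on which the skew-Hermitian form is nonzero, to $(b_2,b_4)$, on which it vanishes, so $gJ_4{}^*g=\nu(g)J_4$ would force $\nu(g)=0$. This is exactly why those two cosets, and the cuspidality argument you attach to them, have no counterpart in the unitary case.

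The error is not structurally fatal; if anything, the correct count makes the unfolding easier. The identity coset is still annihilated by cuspidality, since $P(F)\cap Q(F)$ contains the unipotent radical $U_P(F)$ of the Siegel parabolic (indeed $U_P$ fixes $\langle f_1,f_2\rangle$ pointwise, hence stabilizes $\langle f_1\rangle$), and the remainder of your argument --- the change of variables by $\nu_Q$, the left $R^0(\A)$-invariance of $f_P(\nu_Qg,w)f_Q(\nu_Pg,s)$, and the Fourier expansion collapsing onto the nondegenerate coefficient $W_\phi^\chi$ --- goes through as written. But the step you flagged as the main obstacle is precisely the one you got wrong: carrying out the relative Weyl group computation you propose would have contradicted your count of four and produced the paper's two-coset statement.
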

\begin{proof} One proves that the double coset $Q(F) \backslash G(F) \slash P(F)$ is represented by $\{1,\nu_P\nu_Q^{-1}\}$, and then the unfolding proceeds exactly as in the proof of Proposition \ref{prop:unfoldinggl4}.
\end{proof}

\subsection{Unramified calculation} As is well-known, the global Whittaker coefficient $W^\chi_\phi$ factorizes over the places $v$ of $F$ as a product of local Whittaker functions whenever $\phi$ is a pure tensor in a tensor-product decomposition of $\pi$.  Thus the integral on the right-hand side of (\ref{unfoldedInt}) is an Euler product.  We now compute the local analogues of this integral when all the data is unramified.

More precisely, assume that $v$ is a finite place of $F$ unramified in $E$, write $E_v = F_v \otimes_F E$, and write $p=p_v$ for a uniformizer for $F_v$.  Also write $\mathcal{O}_{F_v}$ for the ring of integers of $F_v$.  Denote by $n(x_1,x_2)$ an element of $G(F_v)$ of the form
\[\left(\begin{array}{cccc}1 & x_1 & *&* \\ & 1& x_2 & * \\ & & 1 &* \\ & & & 1 \end{array}\right)\]
for $x_1 \in E_v$, $x_2 \in F_v$.  Denote by $\chi_v$ the restriction of $\chi$ to $U_B(F_v)$.  Assume that the character $\chi_v$ is unramified in the sense of \cite[pg.\ 219]{cs}.  Set $K_v = G(\mathcal{O}_{F_v})$.  Assume the representation $\pi_v$ is spherical, and denote by $W_v$ the unique element of $\ind_{U_B(F_v)}^{G(F_v)}\chi_v$ in the space of the Whittaker model of $\pi_v$ with $W_v(K_v) = 1$.  Finally, assume $f_P(K_v,w) = f_Q(K_v,s) = 1$. 

Define
\[I_v(W_v,s,w) = \int_{R(F_v)Z_E(F_v) \backslash G(F_v)}{f_{P,v}(\nu_Qg,w)f_{Q,v}(\nu_Pg,s)W_v(g) \,dg}.\]
\begin{theorem} Denote by $\epsilon_{E/F}:F^\times\backslash \mathbf{A}_F^\times$ the quadratic character associated to $E/F$, i.e.\ $\epsilon_{E/F}(p_v)=1$ if the place $v$ of $F$ splits in $E$ and $\epsilon_{E/F}(p_v)=-1$ if $v$ is inert in $E$.  (Recall that $p_v=p$ is a uniformizer of $F_v$.)  Then when the data is unramified,
\[I_v(W_v,s,w) = \frac{L(\pi_v,\mathrm{Std},2w-\frac{1}{2})L(\pi_v,\wedge^2,3s-1)}{L(\epsilon_{E/F,v},4w-1)\zeta_{F_v}(4w)\zeta_{E_v}(3s)\zeta_{F_v}(6s-2)}.\]
Here, we have the notation
\[L(\epsilon_{E/F,v},w) = (1-\epsilon_{E/F}(p)|p|_v^s)^{-1} \textrm{ and } \zeta_{E_v}(s) = \prod_{w|v} (1 - |p|_w^s)^{-1}.\]
\end{theorem}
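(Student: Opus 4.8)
The plan is to mimic the proof of Theorem~\ref{thm:unramgl4} as closely as possible and to split the argument along the two cases of Section~\ref{Lfcns}. First I would apply the Iwasawa decomposition $G(F_v)=U_B(F_v)T(F_v)K_v$ and use the right $K_v$-invariance of the unramified integrand $f_{P,v}(\nu_Q g,w)f_{Q,v}(\nu_P g,s)W_v(g)$ to collapse $I_v(W_v,s,w)$ to an integral over $R^0(F_v)Z_E(F_v)\backslash U_B(F_v)T(F_v)$. Moving the modulus factor $\delta_B^{-1}$ through and separating the torus variables from the unipotent variables then writes $I_v(W_v,s,w)$ as a sum over the dominant cocharacters of the split part of $Z_E\backslash T$ of the normalized spherical Whittaker values $K[\cdot]$ against geometric-series factors coming from the inner integral over $R^0(F_v)\backslash U_B(F_v)$, exactly as in the $\GL_4$ computation; the vanishing of $W_v$ off the dominant cone again truncates this to a sum over a rank-two lattice.

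When $v$ splits in $E$, the isomorphism $\GU(2,2)_{/F_v}\cong\GL_{1/F_v}\times\GL_{4/F_v}$ of Case~1 carries the Siegel parabolic $P$ to the maximal parabolic $P$ of Section~\ref{sec:3var} and the Klingen parabolic $Q$ to the parabolic $Q=P_{3,1}\cap P_{1,3}$ there. Under this identification $I_v(W_v,s,w)$ becomes the $\GL_4$ integral $I_v(W,s_1,s_2,w)$ of Theorem~\ref{thm:unramgl4} specialized to $s_1=s_2=\tfrac{3s}{4}$, the diagonal specialization being forced by the symmetry of the two $\GL_4$-factors and read off by comparing modulus characters. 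Feeding in Theorem~\ref{thm:unramgl4} and the Case~1 $L$-function identities, and using that $\zeta_{E_v}=\zeta_{F_v}^2$ and $L(\epsilon_{E/F,v},\cdot)=\zeta_{F_v}(\cdot)$ at a split place, one checks term by term that the $\GL_4$ answer equals the claimed formula. Thus the split case needs no new computation.

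The inert case is where the new content lies and is the step I expect to be the main obstacle. Here $\GU(2,2)$ is quasi-split but not split over $F_v$, so the directions spanning $R^0(F_v)\backslash U_B(F_v)$ no longer all have dimension one over $F_v$: the short-root direction (joining the two isotropic lines) is rational over $E_v$ and, integrated against the Klingen section, contributes a $\zeta_{E_v}$-factor, while the long-root direction remains $F_v$-rational and contributes a $\zeta_{F_v}$-factor. Concretely the inner integral should produce $\tfrac{1}{\zeta_{F_v}(4w)\zeta_{E_v}(3s)}$, explaining the appearance of $\zeta_{E_v}(3s)$; this is the main structural departure from the $\GL_4$ calculation. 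I would then invoke the Casselman-Shalika formula for the quasi-split group $\GU(2,2)$, which expresses $K[\cdot]$ through the Satake parameter in ${}^LG^\circ\rtimes\theta$, and prove a character identity in the spirit of Lemma~\ref{charTheory} that folds the rank-two sum into a product of two generating functions.

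Finally I would identify these generating functions with $L(\pi_v,\mathrm{Std},2w-\tfrac{1}{2})$ and $L(\pi_v,\wedge^2,3s-1)$ by restricting to the embedded group $G'=\GSp_4$ and applying Proposition~\ref{gsp4L}: the standard and exterior-square $L$-functions of $\pi_v$ pass to the standard and spin $L$-functions of $\mathrm{GSpin}_5\cong\GSp_4$, while the accompanying $L(\omega_\pi,\cdot)$ factors are trivial because $\pi$ has trivial central character. The Frobenius twist $\theta$ in the dual group is precisely what converts the $\GL_4$ normalizing factor $\zeta_{F_v}(4w-1)$ into $L(\epsilon_{E/F,v},4w-1)$, and the remaining factor is the untwisted $\zeta_{F_v}(6s-2)$. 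The genuinely hard part is making the quasi-split Casselman-Shalika step and its attendant character identity precise while correctly tracking these twisted normalizations; once they are pinned down, cancelling the $\GSp_4$ $L$-factor normalizations against the claimed denominator is routine.
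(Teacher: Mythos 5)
Your overall skeleton agrees with the paper's proof in its framing: the split case is handled exactly as you describe (the paper reduces it to Theorem \ref{thm:unramgl4}, and your explicit specialization $s_1=s_2=\tfrac{3s}{4}$ is the correct one), and in the inert case the paper likewise normalizes by $\zeta_{F_v}(4w)\zeta_{E_v}(3s)$ --- your identification of the $E_v$-rational short-root direction as the source of $\zeta_{E_v}(3s)$ and of the $F_v$-rational long-root direction as the source of $\zeta_{F_v}(4w)$ is right --- arriving at a rank-two sum of the form $\sum_{m,n\ge 0} V^{n}U^{2m}\bigl(\tfrac{1-U^{2n+2}}{1-U^2}\bigr)\bigl(\tfrac{1-V^{2m+2}}{1-V^2}\bigr)K^G_{\pi_v}[m,n]$.

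However, there is a genuine gap precisely where you place ``the genuinely hard part'': you propose to invoke a Casselman--Shalika formula for the quasi-split group and then prove a new twisted character identity in the spirit of Lemma \ref{charTheory}, but you neither formulate nor prove that identity, and it is the entire content of the inert case. The paper's proof shows that no new character identity is needed. By a result of Tamir \cite{tamir} (see \cite[Section 5.2]{gh} for an exposition), since the rational root system of $G=\GU(2,2)$ at an inert place is that of $\GSp_4$, the normalized spherical Whittaker function satisfies $K^G_{\pi_v}=K^{\GSp_4}_{\pi_v'}$ on the torus $T'$ of the embedded $\GSp_4$, and the arguments $\diag(p^{m+n},p^n,1,p^{-m})$ appearing in the sum lie in $T'$. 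This converts the inert-case sum verbatim into the unramified sum already evaluated by Bump--Friedberg--Ginzburg for their two-variable $\GSp_4$ integral, \cite[Theorem 1.2]{bfg2}, which equals $\bm{L}(\pi_v',\mathrm{Std},U^2)\bm{L}(\pi_v',\mathrm{Spin},V)/\bm{\zeta}_{F_v}(U^4)$; Proposition \ref{gsp4L} then converts this into the claimed $\GU(2,2)$ $L$-factors. (This last conversion you do have, including the correct observation that the ratio $\bm{\zeta}(U^2)/\bm{\zeta}(U^4)$ is what produces $L(\epsilon_{E/F,v},4w-1)^{-1}$.) So your strategy could in principle be completed, but as written it replaces the two citations that constitute the actual proof --- Tamir's folding of the Whittaker function to $\GSp_4$ and the already-computed sum of \cite{bfg2} --- with an unproven twisted character identity that is substantially harder than anything else in the argument.
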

\begin{proof}  If $v$ splits in $E$, this becomes the unramified calculation considered in Section \ref{sec:3var}.  Thus we briefly explain the proof in the case that $v$ is inert in $E$. Write $q$ for the order of the residue field of the integer ring of $F_v$.  Set $U = q^{-(2w-\frac{1}{2})}$, $V = q^{-(3s-1)}$, and $K^G_{\pi_v} = \delta_B^{-1/2}W_v$, where $W_v$ is, as above, the spherical Whittaker function of $\pi_v$ normalized so that $W_v(1) = 1$.  Define $I_1(W_v,s,w) = \zeta_{F_v}(4w)\zeta_{E_v}(3s)I(W_v,s,w)$. By manipulations similar to those in the proof of Theorem \ref{thm:unramgl4}, we get that
\[I_1(W_v,s,w) = \sum_{n, m \geq 0}{V^{n}U^{2m}\left(\frac{1-U^{2n+2}}{1-U^2}\right)\left(\frac{1-V^{2m+2}}{1-V^2}\right)K^G_{\pi_v}[m,n]},\]
where $K^G_{\pi_v}[m,n] = K^G_{\pi_v}(\diag(p^{m+n},p^{n},1,p^{-m}))$.  Since the rational root system of $G$ is that of $\GSp_4$, a fact due to Tamir \cite{tamir} is that the Casselman-Shalika formula for $G$ reduces to that of $\GSp_4$ in the sense that $K_{\pi_v}^G = K_{\pi_v'}^{\GSp_4}$ on the torus $T'$ of $\GSp_4 \subseteq G$.  Here $\pi_v'$ and $T'$ are as in Proposition \ref{gsp4L}.  See \cite[Section 5.2]{gh} for an exposition of this relationship.  Applying this fact, $I_1$ is computed in \cite[Theorem 1.2]{bfg2} to be
\begin{equation}\label{bfg comp}\frac{\bm{L}(\pi_v',\mathrm{Std},U^2)\bm{L}(\pi_v',\mathrm{Spin},V)}{\bm{\zeta}_{F_v}(U^4)}.\end{equation}
Here, $\bm{L}(\pi_v,\mathrm{Std},Z)$ means $\sum_{k \geq 0}{\tr(\mathrm{Sym}^k \mathrm{Std}(A_{\pi_v}))Z^k}$, where $A_{\pi_v}$ is the conjugacy class in $^LG$ corresponding to $\pi_v$, and similarly for the other $\bm{L}$ and $\bm{\zeta}$-functions. Via Proposition \ref{gsp4L}, (\ref{bfg comp}) is equal to
\[\frac{\bm{\zeta}(U^2)\bm{L}(\pi_v,\mathrm{Std},U)\bm{L}(\pi_v,\wedge^2,V)}{\bm{\zeta}(U^4)\bm{\zeta}(V^2)} = \frac{L(\pi_v,\mathrm{Std},2w-\frac{1}{2})L(\pi_v,\wedge^2,3s-1)}{L(\epsilon_{E/F,v},4w-1)\zeta(6s-2)}.\]
The theorem follows.
\end{proof}

\bibliography{integralRepnBib}
\bibliographystyle{plain}
\end{document}